\date{\today}
\definecolor{darkblue}{rgb}{0.0, 0.0, 0.45}
\newtheorem{Thm}{Theorem}[section]
\newtheorem{Cor}[Thm]{Corollary}
\newtheorem{Lem}[Thm]{Lemma}
\newtheorem{Pro}[Thm]{Proposition}
\newtheorem{Def}[Thm]{Definition}
\newtheorem{Assumption}[Thm]{Assumption}
\newtheorem{Remark}[Thm]{Remark}
\newcommand{\mn}{\wedge}
\newcommand{\mx}{\vee}
\newcommand{\Lra}{\Longrightarrow}
\newcommand{\ra}{\rightarrow}
\newcommand{\da}{\downarrow}
\newcommand{\ua}{\uparrow}
\newcommand{\Let}{\coloneqq}
\newcommand{\diff}{\mathrm{d}}
\newcommand{\tr}{^\intercal}
\newcommand{\ball}[1]{\mathrm{#1}}
\newcommand{\eps}{\varepsilon}
\newcommand{\ol}[1]{\overline{#1}}
\newcommand{\R}{\mathbb{R}}
\newcommand{\N}{\mathbb{N}}
\newcommand{\secref}[1]{\S\ref{#1}}
\newcommand{\Sup}[3]{\sup_{#2 \in [#1,#3]}}
\newcommand{\Inf}[3]{\inf_{#2 \in [#1,#3]}}
\newcommand{\PP}{\mathds{P}}
\newcommand{\EE}{\mathds{E}}
\newcommand{\sigalg}{\mathcal{F}}
\newcommand{\Filtration}{\mathds{F}}
\newcommand{\E}[1]{\mathds{E}{\left[ #1 \right]} }
\newcommand{\traj}[4]{#1_{#4}^{#2;#3}}
\newcommand{\ind}[1]{\mathds{1}_{#1}}
\newcommand{\el}[1]{\ell \big( #1 \big)}
\newcommand{\RA}{\mathrm{RA}}
\newcommand{\setofst}[1]{\mathcal{T}_{[#1]}}
\newcommand{\controlset}{\mathbb}
\newcommand{\set}{\mathbb}
\newcommand{\controlmaps}[1]{\mathcal{#1}}
\newcommand{\control}[1]{\boldsymbol{#1}}
\newcommand{\process}[2]{(#1_{#2})_{#2 \ge 0}}
\newcommand{\Borelsigalg}[1]{\mathfrak{B}(#1)}
\newcommand{\borel}{\mathfrak{B}}
\DeclareMathOperator{\dist}{dist}
\newcommand{\wt}{\widetilde}
\newcommand{\wh}{\widehat}
\title{The Stochastic Reach-Avoid Problem and Set Characterization for Diffusions}
\thanks{PME and JL are with the Automatic Control Laboratory, ETH Z\"urich, 8092 Z\"urich, Switzerland; DC is with the Systems \& Control Engineering, IIT-Bombay, Powai, Mumbai 400076, India. Emails: {\{mohajerin,lygeros\}@control.ee.ethz.ch, chatterjee@sc.iitb.ac.in}}
\author{Peyman Mohajerin Esfahani, Debasish Chatterjee, and John Lygeros}
\begin{document} 
	\maketitle
\begin{abstract}                         
	In this article we approach a class of stochastic reachability problems with state constraints from an optimal control perspective. Preceding approaches to solving these reachability problems are either confined to the deterministic setting or address almost-sure stochastic requirements. In contrast, we propose a methodology to tackle problems with less stringent requirements than almost sure. To this end, we first establish a connection between two distinct stochastic reach-avoid problems and three classes of stochastic optimal control problems involving discontinuous payoff functions. Subsequently, we focus on solutions of one of the classes of stochastic optimal control problems---the exit-time problem, which solves both the two reach-avoid problems mentioned above. We then derive a weak version of a dynamic programming principle (DPP) for the corresponding value function; in this direction our contribution compared to the existing literature is to develop techniques that admit discontinuous payoff functions. Moreover, based on our DPP, we provide an alternative characterization of the value function as a solution of a partial differential equation in the sense of discontinuous viscosity solutions, along with boundary conditions both in Dirichlet and viscosity senses. Theoretical justifications are also discussed to pave the way for deployment of off-the-shelf PDE solvers for numerical computations. Finally, we validate the performance of the proposed framework on the stochastic Zermelo navigation problem. 
\end{abstract}

\section{Introduction}
\label{Section Introduction}
Reachability is a fundamental concept in the study of dynamical systems, and in view of applications of this concept ranging from engineering, manufacturing, biology, and economics, to name but a few, has been studied extensively in the control theory literature. One particular problem that has turned out to be of fundamental importance in engineering is the so-called ``reach-avoid'' problem. 

In the deterministic setting this problem deals with the determination of the set of initial states for which one can find at least one control strategy to steer the system to a target set while avoiding certain obstacles. This problem finds applications in, for example, air traffic management \cite{ref:TomlinLygerosSastry-2000} and security of power networks \cite{ref:Mohajerin_CDC10}. 

The set representing the solution of this problem is known as a capture basin \cite{ref:Aubin-1991}. A direct approach to compute the capture basin is formulated in the language of viability theory in \cite{ref:Cardaliaguet-1996, ref:CardaliaguetQuincampoixSaint-2002}. An alternative and indirect approach to reachability problems proceeds via level set methods defined by value functions that are solutions of appropriate optimal control problems. Employing dynamic programming techniques for reachability and viability problems, one can in turn characterize these value functions by solutions of the standard Hamilton-Jacobi-Bellman (HJB) equations corresponding to these optimal control problems \cite{ref:Lygeros-2004}. The focus of this article is on the stochastic counterpart of this problem. 

\subsection{The literature in the stochastic setting}

In the literature, probabilistic analogs of reachability problems have mainly been studied from an almost-sure perspective. For example, stochastic viability and controlled invariance are treated in \cite{ref:AubinPrato-1990, ref:AubinPratoFrankowska-2000, ref:BardiJensen-2002}. Methods involving stochastic contingent sets \cite{ref:AubinPrato-1998, ref:AubinPratoFrankowska-2000}, viscosity solutions of second-order partial differential equations \cite{ref:BuckdahnPengQuincampoixRainer-1998,ref:BardiGoatin-1999, ref:BardiJensen-2002}, derivatives of the distance function \cite{ref:PratoFrankowska-2001}, and equivalence relation to certain deterministic control systems \cite{ref:PratoFrankowska-2004} were all developed in this context. 

Geared towards similar almost-sure reachability objective, the article \cite{ref:SonerTouzi-02} introduced a new class of the so-called stochastic target problems, and characterized the solution via a dynamic programming approach. The differential properties of the almost-sure reachable set were also studied based on the geometrical partial differential equation which is the analogue of the HJB equation \cite{ref:SonerTouzi-geometric-02} in that setting.

Although almost sure versions of reachability specifications are interesting in their own right, they may be a too strict concept in some applications, particularly when a common specification is only to control the probability that undesirable events take place. In this regard, the authors of \cite{ref:BouchEliTouzi-10} recently extended the stochastic target framework of \cite{ref:SonerTouzi-02} to allow for unbounded control set, which together with the martingale representation theory, addresses the aforementioned almost-sure limitation in an augmented state space; see also the recent book \cite{ref:Touzi-13}. This article approaches the same question, but indirectly and from an optimal control perspective.

\subsection{Our methodology and contributions}
The stochastic ``reach-avoid" problems studied in this article are as follows:

	\begin{quote}
		$\RA$: \textsl{Given an initial state $x\in\R^n$, a horizon $T > 0$, a number $p\in\:[0, 1]$, and two disjoint sets $A, B\subset\R^n$, determine whether there exists a control policy such that the process reaches $A$ prior to entering $B$ within the interval $[0, T]$ with probability at least $p$.}
	\end{quote}

Observe that this is a significantly different problem compared to its almost-sure counterpart referred to above. It is of course immediate that the solution of the above problem is trivial if the initial state is either in $B$ (in which case it is almost surely impossible) or in $A$ (in which case there is nothing to do). However, for generic initial conditions in $\R^n\setminus(A \cup B)$, due to the inherent probabilistic nature of the dynamics, the problem of selecting a policy and determining the probability with which the controlled process reaches the set $A$ prior to hitting $B$ is non-trivial. In addition, we address the following slightly different reach-avoid problem compared to $\RA$ above, that requires the process to be in the set $A$ at time $T$:

	\begin{quote}
		$\widetilde{\RA}$: \textsl{Given an initial state $x\in\R^n$, a horizon $T > 0$, a number $p\in\:[0, 1]$, and two disjoint sets $A, B\subset\R^n$, determine whether there exists a policy such that with probability at least $p$ the controlled process resides in $A$ at time $T$ while avoiding $B$ on the interval $[0, T]$.}
	\end{quote}

Our methodology and contributions toward the above problems are summarized below:

\begin{enumerate}[label=(\roman*), itemsep = 0	mm, widest=iii]
\item We establish a link from the problems $\RA$ and $\wt \RA$ to three different classes of stochastic optimal control problems involving \emph{discontinuous} payoff functions in \secref{sec:connection}; 

\item focusing on the class of exit-time problems that addressed both the reach-avoid problems alluded above, we propose a \emph{weak} dynamic programming principle (DPP) leading to a (\emph{discontinuous}) PDE characterization along with appropriate boundary conditions; 

\item finally, in \secref{sec:connection 3 to 4} we provide theoretical justification that pave the analytical ground to deploy existing (\emph{continuous}) off-the-shelf PDE solvers for our numerical purposes. 
\end{enumerate}

More specifically, we first show that the desired set of initial conditions for the reach-avoid problems $\RA$ and $\wt \RA$ can be translated as super level sets of particular functions described in the context of stochastic optimal control problems (Propositions~\ref{prop:Reach-V1} and \ref{prop:Reach-Terminal}). Different classes of optimal control problems are suggested for each of the two reach-avoid problems, and it turns out that the class of exit-time problems with discontinuous payoff functions can adequately address both the reach-avoid problems. This connection is relatively straightforward and does not require any assumption on the underlying dynamics. We, however, are not aware of any results in the literature reflecting this connection. 

The exit-time problem with a continuous payoff function is a classical stochastic optimal control problem whose alternative PDE characterizations have been established in the literature; see for instance \cite[Section IV.7]{SonerBook}. However, these results are not directly applicable to our reach-avoid problems due to the discontinuity of the payoff function. We address this technical issue by developing a DPP in a weak sense in the spirit of \cite{BouchardTouzi_WeakDPP} (Theorem~\ref{Theorem DPP 1}). We emphasize that the results of \cite{BouchardTouzi_WeakDPP} were developed in the framework of fixed time horizon and the optimal stopping time. Neither of these settings is applicable to the exit-time problem. To that end, it turns out that we require some technical continuity properties which are essential for the proposed weak DPP (Proposition~\ref{prop:J1 semicontinuous}) as well as the respective boundary conditions (Proposition~\ref{prop:unif cont}). To the best of our knowledge, these continuity results are also new in the literature. It is also worth noting that this weak formulation avoids delicate issues related to a measurable selection in the context of optimal control problems.


Based on the proposed DPP, we characterize the value function as the (discontinuous) viscosity solution of a PDE (Theorem \ref{Theorem DPE 1}) along with boundary conditions in both viscosity and Dirichlet (pointwise) senses (Theorem \ref{thm:boundary}). We remark that due to the discontinuity of the payoff function, the viscosity boundary conditions involves a non-trivial regularity condition which is a stronger version of the requirement for the proposed DPP (see Proposition \ref{prop:unif cont}). These technical details are required to rigorously settle the PDE characterization for a stochastic exit-problem problem and we cannot find them elsewhere in the existing literature. 

Finally, we provide theoretical justifications (Theorem \ref{thm:approx}) so that the Reach-Avoid problem is amenable to numerical solutions by means of off-the-shelf PDE solvers, which have been mainly developed for continuous solutions. Preliminary results of this study were reported in \cite{ref:MohChaLyg-11} without covering the technical details and mathematical proofs. 

Organization of the article: In \secref{sec:ProblemStatement} we formally introduce the stochastic reach-avoid problems $\RA$ and $\wt \RA$ above. In \secref{sec:connection} we characterize the set of initial conditions that solve the reach-avoid problems in terms of super level sets of three different value functions. Focusing on the class of exit-time problems, in \secref{sec:Alternative Characterization} we establish a DPP and characterize it as the solution of a PDE along with some boundary conditions. Finally, \secref{sec:connection 3 to 4} presents results connecting those in \secref{sec:connection} and \secref{sec:Alternative Characterization} and justifies the deployment of the existing PDE solvers for numerical purposes. To illustrate the performance of our technique, the theoretical results developed in preceding sections are applied to solve the stochastic Zermelo navigation problem in \secref{sec:simulation}. We conclude with some remarks and directions for future work in \secref{sec:conclusion}. For better readability, some of the technical proofs are given in appendices. 
	
\paragraph{\bf Notation}
Given $a,b \in \R$, we define $a\mn b \Let \min\{a, b\}$ and $a\mx b \Let \max\{a, b\}$. We denote by $A^c$ (resp.\ $A^\circ$) the complement (resp.\ interior) of the set $A$. We also denote by $\ol{A}$ (resp.\ $\partial A$) the closure (resp.\ boundary) of $A$. We let $\ball{B}_r(x)$ be an open Euclidean ball centered at $x$ with radius $r$. The Borel $\sigma$-algebra on a topological space $\set{A}$ is denoted by $\borel(\set{A})$, and measurability on $\R^d$ will always refer to Borel-measurability. The indicator function $\ind{\set A}$ is defined through $\ind{A}(x) = 1$ if $x \in \set A$; $=0$ otherwise. Given function \(f:\set{A}\ra\R\), the lower and upper semicontinuous envelopes of \(f\) are defined, respectively, by $f_{*}(x) := \liminf_{x' \ra x} f(x')$ and $f^{*}(x) := \limsup_{x' \ra x} f(x')$. The set $\text{USC}(\set{A})$ (resp.\ $\text{LSC}(\set{A})$) denotes the collection of all upper semicontinuous (resp.\ lower semicontinuous) functions from $\set{A}$ to $\R$. Throughout this article all (in)equalities between random variables are understood in almost sure sense. For the ease of the reader, we also provide here a partial notation list which will be also explained in more details later throughout the article:

	\begin{enumerate}[label=$\bullet$, itemsep = 0mm, leftmargin=*, align=right, widest=iii] 
		\item $\set S \Let [0,T]\times \R^n$;
		\item $\controlmaps{U}_\tau$: set of $\Filtration_{\tau}$-progressively measurable maps into $\mathds{U}$;
		\item $\setofst{\tau_1,\tau_2}$ : the collection of all $\Filtration_{\tau_1}$-stopping times $\tau$ satisfying $\tau_1 \leq \tau \leq \tau_2$ $\PP$-a.s.\;
		\item $(\traj{X}{t,x}{\control{u}}{s})_{s \ge 0}$: stochastic process under the control policy $\control{u}$ and assumption $\traj{X}{t,x}{\control{u}}{s} \Let x$ for all $s \le t$;
		\item $\tau_A$: first entry time to $A$, see Definition \ref{def:entry time};
		\item $\mathcal{L}^u$: Dynkin operator, see Definition \ref{def:Dynkin operator}.
	\end{enumerate}

\section{The Setting and Statement of Problem}
\label{sec:ProblemStatement}
Consider a filtered probability space $(\Omega, \sigalg, \Filtration,\PP)$ whose filtration $\Filtration = (\sigalg_s)_{s\geq 0}$ is generated by an $n$-dimensional Brownian motion $\process{W}{s}$ adapted to $\Filtration$. Let the natural filtration of the Brownian motion $\process{W}{s}$ be enlarged by its right-continuous completion; --- the usual conditions of completeness and right continuity, where $\process{W}{s}$ is a Brownian motion with respect to $\Filtration$ \cite[p.\ 48]{ref:KarShr-91}. For every $t\ge0$, we introduce an auxiliary subfiltration $\Filtration_t \Let (\sigalg_{t,s})_{s\ge0}$, where $\sigalg_{t,s}$ is the $\PP$-completion of $\sigma\big(W_{r \mx t} - W_t, r \in [0,s]\big)$. Note that for $s \le t$, $\sigalg_{t,s}$ is the trivial $\sigma-$algebra, and any $\sigalg_{t,s}$-random variable is independent of $\sigalg_t$. By definitions, it is obvious that $\sigalg_{t,s} \subseteq \sigalg_s$ with equality in case of $t=0$.

Let $\controlset{U}\subset\R^m$ be a control set, and $\controlmaps{U}_t$ denote the set of $\Filtration_t$-progressively measurable maps into $\controlset{U}$.\footnote{Recall \cite[p.\ 4]{ref:KarShr-91} that a $\controlset{U}$-valued process $\process{y}{s}$ is $\Filtration_t$-progressively measurable if for each $T > 0$ the function $\Omega\times[0, T]\ni (\omega, s)\mapsto y(\omega, s)\in \controlset{U}$ is measurable, where $\Omega\times[0, T]$ is equipped with $\sigalg_{t,T}\otimes\Borelsigalg{[0, T]}$, $\controlset{U}$ is equipped with $\Borelsigalg{\controlset{U}}$, and $\Borelsigalg{S}$ denotes the Borel $\sigma$-algebra on a topological space $S$.} We employ the shorthand $\controlmaps{U}$ instead of $\controlmaps{U}_0$ for the set of all $\Filtration$-progressively measurable policies.  We also denote by $\mathcal{T}$ the collection of all $\Filtration$-stopping times. For $\tau_1, \tau_2 \in \mathcal{T}$ with $\tau_1 \le \tau_2$ $\PP$-a.s., the subset $\setofst{\tau_1,\tau_2}$ is the collection of all $\Filtration_{\tau_1}$-stopping times $\tau$ such that $\tau_1 \leq \tau \leq \tau_2$ with probability 1. Note that all $\Filtration_{\tau}$-stopping times and $\Filtration_{\tau}$-progressively measurable processes are independent of $\sigalg_{\tau}$.		

The basic object of our study concerns the $\R^n$-valued stochastic differential equation (SDE)
	\begin{equation}
		\label{SDE}
		\diff X_s = f(X_s, u_s)\,\diff s + \sigma(X_s, u_s)\,\diff W_s,\qquad X_0 = x,\quad  s \ge 0,
	\end{equation}
where $f:\R^n\times \controlset{U} \ra \R^n$ and $\sigma:\R^n\times\controlset{U} \ra\R^{n \times d}$ are continuous and Lipschitz in first argument uniformly with respect to the second argument, $\process{W}{s}$ is the above standard $d$-dimensional Brownian motion, and the control set $\set U \subset \R^m$ is compact.\footnote{We slightly abuse notation and earlier used $\sigma$ as a sigma algebra as well. However, it will be always clear from the context to which $\sigma$ we refer.} It is known that under this setting the SDE \eqref{SDE} admits a unique strong solution \cite{ref:Borkar-05}. We let $(\traj{X}{t, x}{\control{u}}{s})_{s \ge t}$ denote the unique strong solution of \eqref{SDE} starting from time $t$ at the state $x$ under the control $\control{u}$. For future notational simplicity, we slightly generalize the definition of $\traj{X}{t, x}{\control{u}}{s}$, and extend it to the whole interval $[0,T]$ where $\traj{X}{t, x}{\control{u}}{s} \Let x$ for all $s$ in $ [0, t]$. 
%
	
Given an initial time $t$ and the disjoint sets $A, B \subset \R^n$, we are interested in the set of initial conditions $x \in \R^n$ where there exists an admissible control $\control{u} \in \controlmaps{U}$ such that with probability more than $p$ the state trajectory $\traj{X}{t,x}{\control{u}}{s}$ hits the set $A$ before set $B$ within the time horizon $T$. Our main objective in this article is to propose a framework in order to characterize this set of initial condition, which is formally introduced as follows.

	\begin{Def}[Reach-Avoid within \({[0, T]}\)]
	\label{def:RA within}
	\begin{flalign*}
	    \RA (t,p;A,B) \Let \Big\{x \in \R^n& ~\big |~ \exists \control{u} \in \controlmaps{U} ~:~ \\
	    &\PP \Big( \exists s \in [t,T], ~ \traj{X}{t,x}{\control{u}}{s} \in A ~ \text{and} ~ \forall r \in [t,s] ~ \traj{X}{t,x}{\control{u}}{r} \notin B \Big) > p \Big \}.
	\end{flalign*}
	\end{Def}

We also study another reach-avoid problem denoted by $\widetilde{\RA}$ as mentioned in \secref{Section Introduction}. As opposed to Definition \ref{def:RA within} that only requires to reach the target sometime within the interval $[t,T]$, the problem $\wt{\RA}$ poses constraint for being in the target set at time $T$ while avoiding barriers over the period $[t, T]$. Namely, we define the set $\widetilde{\RA}(t,p;A,B)$ as the set of all initial conditions for which there exists an admissible control strategy $\control{u} \in \controlmaps{U}$ such that with probability more than $p$, $\traj{X}{t,x}{\control{u}}{T}$ belongs to $A$ and the process avoids the set $B$ over the interval $[t, T]$.

	\begin{Def}[Reach-Avoid at the terminal time $T$]
	    \label{def:RA terminal}
	    \begin{flalign*}
	    \widetilde{\RA}(t,p;A,B) \Let \Big\{ x \in \R^n& ~\big |~ \exists \control{u} \in \controlmaps{U} ~:~ \\
	    &\PP \Big( \traj{X}{t,x}{\control{u}}{T} \in A ~ \text{and} ~ \forall r \in [t,T] ~ \traj{X}{t,x}{\control{u}}{r} \notin B \Big) > p \Big \}.
		\end{flalign*}
	\end{Def}

\section{A Connection to Stochastic Optimal Control Problem}
\label{sec:connection}
In this section we establish a connection between the stochastic reach-avoid problems $\RA$ and $\wt{\RA}$ to three different classes of stochastic optimal control problems. The results presented in this section rely on pathwise analysis, and are not necessarily confined to the SDE setting. The following definition is one of the key elements in our framework. 
	
	\begin{Def}[First entry time]
	\label{def:entry time}
	Given a control $\control{u}$, the process $(\traj{X}{t, x}{\control{u}}{s})_{s \ge t}$, and a set $A \subset \R^n$, we introduce\footnote{By convention, $\inf\emptyset = \infty$.} the \emph{first entry time} to $A$ by
	\begin{flalign}
		\tau_A(t,x) = \inf \big\{ s \geq t ~|~ \traj{X}{t, x}{\control{u}}{s} \in A \big\}.
	\end{flalign}
	\end{Def}

Let us note that the first entry time in Definition \ref{def:entry time} is indeed an $\Filtration_t$-stopping time \cite[Theorem 1.6, Chapter 2]{ref:EthKur-86}. 

	\begin{Remark}[Entry time properties]
	\label{rem:entry time}
	In light of almost sure continuity of the solution process, for any initial condition $(t,x)$ and control $\control{u} \in \controlmaps{U}$ we have
	\begin{subequations}
		\begin{flalign}
			\label{rem:A cup B} \tau_{A \cup B} &= \tau_A \mn \tau_B, \\
			\label{rem:exit time A} \traj{X}{t,x}{\control{u}}{s} \in A &\Lra \tau_{A} \leq s, \\
			\label{rem:exit time closed set} A ~ \text{is closed} &\Lra \traj{X}{t,x}{\control{u}}{\tau_A } \in A.
		\end{flalign}
	\end{subequations}
	\end{Remark}
	One can think of several different ways of characterizing probabilistic reach-avoid sets,

	see for instance \cite{Chatterjee2011} and the references therein dealing with discrete-time problems. Motivated by these works, we consider value functions involving expectation of indicator functions of certain sets. Three alternative characterizations are considered and we show all three are equivalent. We define the functions $V_i : [0,T] \times \R^n \ra [0,1]$, $i\in \{1,2,3\}$, as
    \begin{subequations}
    \label{Vi}
    \begin{align}
    	\label{V1}	V_1(t,x)    & := \sup_{\control{u} \in \controlmaps{U}} \EE\bigl[\ind{A}( \traj{X}{t,x}{\control{u}}{\wh{\tau}})\bigr] \qquad \text{where} \quad \wh{\tau} := \tau_{A \cup B} \mn T,\\
    	\label{V2}	V_2(t,x)	& := \sup_{\control{u} \in \controlmaps{U}} \EE\biggl[\Sup{t}{s}{T} \Bigl\{ \ind{A}(\traj{X}{t,x}{\control{u}}{s}) \mn \Inf{t}{r}{s} \ind{B^c}(\traj{X}{t,x}{\control{u}}{r})\Bigr\} \biggr],\\
        \label{V3}	V_3(t,x)	& := \sup_{\control{u} \in \controlmaps{U}} \sup_{\tau \in \setofst{t,T}}\inf_{\sigma \in \setofst{t,\tau}} \E{\ind{A}(\traj{X}{t,x}{\control{u}}{\tau}) \mn \ind{B^c}(\traj{X}{t,x}{\control{u}}{\sigma})}.
    \end{align}
    \end{subequations}
Here $\tau_{A\cup B}$ is the entry time introduced in Definition \ref{def:entry time}, and depends on the initial condition $(t,x)$. For notational simplicity, we drop the initial condition in this section. 

In \eqref{V1}, the process $\traj{X}{t,x}{\control{u}}{\cdot}$ is controlled until a particular stopping time $\wh \tau$, by which instant the process either exits from the set $A\cup B$ or the terminal time $T$ is reached. In this light, the stochastic optimal control \eqref{V1} is also known as exit-time problem. A sample $\omega \in \Omega$ is a ``successful" path if the stopped process $\traj{X}{t,x}{\control{u}}{\wh \tau(\omega)}(\omega)$ resides in $A$. This requirement is captured via the payoff function $\ind{A}(\cdot)$. 

In the definition of $V_2$ in \eqref{V2}, there is no stopping time, and one may observe that the entire process $\traj{X}{t,x}{\control{u}}{\cdot}$ is considered. Here the requirement of reaching the target set $A$ before the avoid set $B$ is taken into account by the supremum and infimum operations and payoff functions $\ind{A}$ and $\ind{B^c}$. 

In a fashion similar to \eqref{V1}, the function $V_3$ in \eqref{V3} involves some stopping time strategies. The stopping strategies, however, are not fixed and the stochastic optimal control problem can be viewed as a game between two players with different authorities. Namely, the first player has both control $\control{u}\in \controlmaps{U}$ and stopping $\tau \in \setofst{t,T}$ strategies whereas the second player has only a stopping strategy $\sigma \in \setofst{t,\tau}$, which is dominated by the first player's stopping time $\tau$; each player contributes through different maps to the payoff function.

	\begin{Pro}[Connection from $\RA$ to \eqref{Vi}]
	\label{prop:Reach-V1}
		Let sets $A, B$ be disjoint closed subsets of $\R^n$. Then, the equality $V_1 = V_2 = V_3$ holds on $\set S \Let  [0,T]\times \R^n$, and we have
		\begin{equation*}
			\RA(t,p;A,B) =  \big\{ x \in \R^n ~|~ V_i(t,x) > p \big\}, \quad i \in \{1,2,3\},
		\end{equation*}
		where the set $\RA$ is the set defined in Definition \ref{def:RA within}.
	\end{Pro}
    
	\begin{proof}
		See \ref{app:A}.
	\end{proof}

One can establish a connection between the reach-avoid problem $\wt \RA$ in Definition \ref{def:RA terminal} and different classes of stochastic optimal control problems along lines similar to Propositions \ref{prop:Reach-V1}. To this end, let us define the value functions $\wt{V}_i : [0,T] \times \R^n \ra [0,1]$, $i\in\{1,2,3\}$, as
	\begin{subequations}
	\label{Vtilde}
	\begin{align}
		\label{Vtilde 1}
	\widetilde{V}_1(t,x)    & := \sup_{\control{u} \in \controlmaps{U}} \EE\bigl[\ind{A}( \traj{X}{t,x}{\control{u}}{\widetilde{\tau}})\bigr] \qquad \text{where} \quad \widetilde{\tau} := \tau_B \mn T,\\
		\label{Vtilde 2}
	\widetilde{V}_2(t,x)	& := \sup_{\control{u} \in \controlmaps{U}} \EE\biggl[ \ind{A}(\traj{X}{t,x}{\control{u}}{T}) \mn \Inf{t}{r}{T} \ind{B^c}(\traj{X}{t,x}{\control{u}}{r})\biggr],\\
	    \label{Vtilde 3}
	\widetilde{V}_3(t,x)	& := \sup_{\control{u} \in \controlmaps{U}} \inf_{\sigma \in \setofst{t,T}} \E{\ind{A}(\traj{X}{t,x}{\control{u}}{T}) \mn \ind{B^c}(\traj{X}{t,x}{\control{u}}{\sigma})}.
	\end{align}
	\end{subequations}

We state the following proposition concerning assertions identical to those of Proposition \ref{prop:Reach-V1} for the reach-avoid problem of Definition \ref{def:RA terminal}.
	
	\begin{Pro}[Connection from $\wt \RA$ to \eqref{Vtilde}]
	\label{prop:Reach-Terminal}
		Let $A, B\subset R^n$ be disjoint, and suppose $B$ is closed. Then, the equality $\wt V_1 = \wt V_2 = \wt V_3$ holds on $\set S \Let  [0,T]\times \R^n$, and we have
		\begin{align*}
			\widetilde{\RA}(t,p;A,B) =  \big\{ x \in \R^n ~|~ \widetilde{V}_i(t,x) > p \big\}, \quad i \in \{1,2,3\},
		\end{align*}
		 where the set $\widetilde{\RA}$ is the set defined in Definition \ref{def:RA terminal}.
	\end{Pro}

	\begin{proof}
	    The proof follows effectively the same arguments as in the proofs of Proposition \ref{prop:Reach-V1} in \ref{app:A}.
	\end{proof}

The stochastic control problems introduced in \eqref{V1} and \eqref{Vtilde 1} are well-known as the exit-time problem \cite[p.\ 6]{SonerBook}. Note that in light of Propositions \ref{prop:Reach-V1} and \ref{prop:Reach-Terminal}, both problems in Definitions~\ref{def:RA within} and \ref{def:RA terminal} can alternatively be characterized in the framework of exit-time problems, see \eqref{V1} and \eqref{Vtilde 1}, respectively. Motivated by this, in the next section we shall focus on this class of problems. 

\section{Alternative Characterization of the Exit-Time Problem}
\label{sec:Alternative Characterization}
This section presents an alternative characterization of the exit-time problem based on solutions of certain PDEs. Let us highlight that the exit-time formulations \eqref{V1} and \eqref{Vtilde 1} involve \emph{discontinuous} payoff functions, to which the classical approaches, for example \cite{SonerBook,Krylov_ControlledDiffusionProcesses}, are not directly applicable. Consider the function
	\begin{align}
		\label{V}	V(t,x)	& := \sup_{\control{u} \in \controlmaps{U}_t} \EE\bigl[\el{ \traj{X}{t,x}{\control{u}}{\wh{\tau}(t,x)} } \bigr], \qquad\wh{\tau}(t,x) := \tau_{O}(t,x) \mn T,
	\end{align}
where the payoff function $\ell: \R^n \ra \R$ is bounded (not necessarily continuous), and $O$ is a given subset of $\R^n$. Recall that $\tau_{O}$ is the stopping time defined in Definition \ref{def:entry time} that in case of value function \eqref{V1} can be considered as $O = A \cup B$. It is immediate to observe that the functions \eqref{V1} and \eqref{Vtilde 1} are particular cases of \eqref{V} where the payoff function is $\ell(\cdot) \Let \ind{A}(\cdot)$. 
	
Hereafter we shall restrict our control processes to $\controlmaps{U}_t$, the collection of all $\Filtration_t$-progressively measurable processes $\control{u} \in \controlmaps{U}$. In view of independence of the increments of Brownian motion, the restriction of control processes to $\controlmaps{U}_t$ is not restrictive, and one can show that the function \eqref{V} remains the same if $\controlmaps{U}_t$ is replaced by $\controlmaps{U}$; see, for instance, \cite[Theorem 3.1.7, p. 132]{Krylov_ControlledDiffusionProcesses} and \cite[Remark 5.2]{BouchardTouzi_WeakDPP}.

Our objective is to characterize the function $V$ in \eqref{V} as a (discontinuous) viscosity solution of a suitable Hamilton-Jacobi-Bellman equation. 

\subsection{Assumptions and preliminaries}
For the main results of this section we need the following technical assumptions:

\begin{Assumption} We stipulate that
	\label{a:exit time}
        \begin{enumerate}[label=\alph*., align=right, widest=m, leftmargin=*]
           \item \label{a:exit time:degenerate}
            \textit{(Non-degeneracy)} The controlled processes are uniformly non-degenerate, i.e., there exists $\delta>0$ such that for all $x \in \R^n$ and $u \in \controlset{U}$, $\sigma(x,u) \sigma^{\tr}(x,u) > \delta I$ where $\sigma(x,u)$ is the diffusion term in SDE (\ref{SDE}).
           \item \label{a:exit time:Set condition}
            \textit{(Interior cone condition)} There are positive constants $h$, $r$, and an $\R^n$-value bounded map $\eta: \ol{O} \ra \R^n$ satisfying
            \begin{equation*}
                \ball{B}_{rt}\big( x+\eta(x)t\big) \subset O \qquad \text{for all $x \in \ol{O}$ and $t \in (0,h]$ }
            \end{equation*}
            where $\ball{B}_r(x)$ denotes an open ball centered at $x$ and radius $r$, and $\ol{O}$ stands for the closure of the set $O$ (see Figure \ref{Fig:Cone}).
           \item \label{a:exit time:l lsc}
            \textit{(Lower semicontinuity)} The payoff function $\ell$ in \eqref{V} is lower semicontinuous.
        \end{enumerate}
\end{Assumption}

If the set $A$ is open, then the function $\ell(\cdot) = \ind{A}(\cdot)$ as in \eqref{V1} and \eqref{Vtilde 1} satisfies Assumption \ref{a:exit time}.\ref{a:exit time:l lsc} The interior cone condition in Assumption \ref{a:exit time}.\ref{a:exit time:Set condition}\ concerns shapes of the set $ O$. Figure \ref{Fig:Cone} illustrates two typical scenarios.
	\begin{figure}
		\centering
		\subfigure[Interior cone condition holds at every point of the boundary.]{\label{Fig:IntCone}\includegraphics[scale = 1]{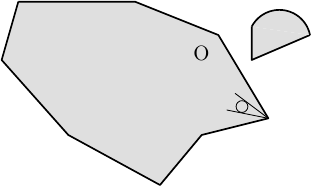}} \qquad
		\subfigure[Interior cone condition fails at the point $p$---the only possible interior cone at $p$ is a line.]{\label{Fig:NoIntCone}\includegraphics[scale = 1]{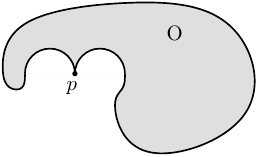}}
		\caption{Interior cone condition of the boundary.}
		\label{Fig:Cone}
	\end{figure}
	Let us define the function $J : \set{S} \times \controlmaps{U} \ra \R$:
	\begin{equation}
	    \label{J}
	    J\big(t,x,\control{u}\big) := \E{ \el{\traj{X}{t,x}{\control{u}}{\wh{\tau}(t,x)}}}, \qquad\wh{\tau}(t,x) := \tau_{O}(t,x) \mn T.
	\end{equation}
	Note that the information of the set $O$ is encoded in the definition of the stopping time $\wh \tau$. Under Assumptions \ref{a:exit time}, we establish continuity of $\wh{\tau}(t,x)$ and consequently the lower semicontinuity of $J(t,x,\control{u})$ with respect to $(t,x)$, which will be the main ingredient of our results in this section. 

\begin{Pro}[Lower semicontinuity]
    \label{prop:J1 semicontinuous}
    Consider the system (\ref{SDE}) and suppose that Assumption \ref{a:exit time} holds. Then, for any control $\control{u} \in \controlmaps{U}$ and initial condition $(t_0,x_0) \in \set{S}$, the function $(t,x) \mapsto \wh{\tau}(t,x)$ is continuous at $(t_0,x_0)$ with probability 1.\footnote{Recall that the stopping time $\wh \tau$ depends on the set $O$ which is assumed to meet the interior cone condition in Assumption \ref{a:exit time}.\ref{a:exit time:Set condition}} Moreover, the function $(t,x) \mapsto J\big(t,x,\control{u}\big)$ defined in \eqref{J} is uniformly bounded and lower semicontinuous, i.e., 
\begin{equation*}
    J\big(t,x,\control{u}\big) \leq \liminf_{(t',x') \ra (t,x)} J\big(t',x',\control{u}\big).
\end{equation*}
\end{Pro}

\begin{proof}[Sketch of the proof]
The proof essentially relies on two facts: (i) Without loss of generality, we can work with the version of the solution process which is almost sure continuous in the initial condition thanks to Kolmogorov's continuity criterion \cite[Cor.\ 1 Chap. IV, p. 220]{ref:Protter-2005} and classical inequalities concerning diffusion processes governed by SDEs \cite[Chap.\ 2]{Krylov_ControlledDiffusionProcesses}; (ii) The set of sample paths of a non-degenerate process which hits the boundary of a set satisfying Assumption \ref{a:exit time}.\ref{a:exit time:Set condition}\ and do not enter the set is negligible \cite[Corollary 3.2, p.\ 65]{ref:Bass-1998}. See \ref{app:B} for the detailed analysis.
\end{proof}
	
The main objective of this section is to provide a dynamic programming characterization of the function $V$ in \eqref{V}. To this end, given a stopping time $\theta \in \setofst{t,T}$, we need to split an admissible control onto two random intervals $[t, \theta]$ and $]\theta, T]$. The following definition formalize this separation task. Note that the control process $\control{u} \Let (u_s)_{s\ge 0} \in \controlmaps{U}_t$ at time $s \ge 0$ can be viewed as a measurable mapping $(W_{r \mx t} - W_t)_{[0,s]} \mapsto u_s \in \set{U}$, where $(W_{s})_{s\ge 0}$ is the $d$-dimensional Brownian motion in \eqref{SDE}; see \cite[Def.\ 1.11, p.\ 4]{ref:KarShr-91} for the details. Then, for $\theta \in \setofst{t,T}$ and $u \in \controlmaps{U}_t$, pathwise for any realization $\omega \in \Omega$ we define the \emph{random} policy $\control{u_\theta} \in \controlmaps{U}_{\theta(\omega)}$ as
	\begin{align}
	\label{u_theta}
		\big(W_{\cdot \mx \theta(\omega)} - W_{\theta(\omega)}\big) \mapsto \control{u}\big(W_{\cdot \mn \theta(\omega)} + W_{\cdot \mx \theta(\omega)} - W_{\theta(\omega)}\big) =: \control{u_\theta}. 
	\end{align}
Notice that $W_{.} \equiv W_{. \mn \theta(\omega)} + W_{. \mx \theta(\omega)} - W_{\theta(\omega)}$, and as such the randomness of $\control{u_\theta}$ is referred to the term $W_{.\mn \theta(\omega)}$. In view of definition \eqref{u_theta}, any admissible control $\control{u}$ can be described by
\begin{align}
\label{u_theta2}
	\control{u} = \ind{[t,\theta]}\control{u} + \ind{]\theta,T]}\control{u_{\theta}}.
\end{align}
Let us recall that by $\ind{[t,\theta]}\control{u}$, we mean that for any realization $\omega \in \Omega$ and any time $s$, we have $\ind{[t,\theta(\omega)]}{u}_s(\omega) = {u}_s(\omega)$ if $s \in [t,\theta(\omega)]$; and $= 0$ otherwise. The notation for $\ind{]\theta,T]}\control{u_\theta}$ is understood in similar fashion. It is worth noting that the relation \eqref{u_theta2} effectively implies that the random control $\control{u_\theta}$ indeed takes the same values as the control $\control{u}$ over the random time interval $]\theta,T]$. 

\begin{Lem} [Strong Markov property]
    \label{lem:Strong Markov}
      Consider the system (\ref{SDE}) whose solution process starting from $(t,x)$ controlled with $\control{u} \in \controlmaps{U}_t$ is denoted by $\traj{X}{t,x}{\control{u}}{\cdot}$. For any stopping time $\theta \in \setofst{t,T}$,  with probability one we have
    \begin{equation*}
        \E{ \el{\traj{X}{t,x}{\control{u}}{\wh{\tau}(t,x)} } ~\Big |~\sigalg_{\theta} } = \ind{ \{\wh{\tau}(t,x) < \theta \}} \el{ \traj{X}{t,x}{\control{u}}{\wh{\tau}(t,x)}}
        + \ind{\{\wh{\tau}(t,x)\geq\theta\}} J\big(\theta,\traj{X}{t,x}{\control{u}}{\theta},\control{u_\theta}\big) 
    \end{equation*}
    where $\control{u_\theta}$ is the random policy in the sense of \eqref{u_theta}, and the function $J$ and stopping time $\wh{\tau}(t,x)$ are as defined in \eqref{J}. 
\end{Lem}
\begin{proof}
    By Definition \ref{def:entry time}, we have with probability 1 that
    \begin{align*}
    \ind{\{\wh{\tau}(t,x) \ge \theta \}}\wh{\tau}(t,x) = \ind{\{ \wh{\tau}(t,x) \ge \theta \}} \big(\wh{\tau}(\theta,\traj{X}{t,x}{\control{u}}{\theta})+\theta-t\big). 
    \end{align*}
    One can now follow effectively the same computations as in the proof of \cite[Proposition 5.1]{BouchardTouzi_WeakDPP} to conclude the assertion.
\end{proof}


\subsection{Dynamic Programming Principle}\label{Section DPP 1}

The following Theorem provides a dynamic programming principle (DPP) for the exit time problem introduced in \eqref{V}.
\begin{Thm}[Dynamic Programming Principle]
\label{Theorem DPP 1}
    Consider the system \eqref{SDE} and suppose that Assumption \ref{a:exit time} holds. For any  $(t,x) \in \set{S}$ and family of stopping times $\{ \theta^{\control{u}}, \control{u} \in \controlmaps{U}_t\} \subset \setofst{t,T}$, we have
    \begin{subequations}
    \label{DPP}
    \begin{align}
    \begin{array}{r}
        \label{DPP 1 sub}  V(t,x) \leq \sup_{\control{u} \in \controlmaps{U}_t} \EE \Big[\ind{ \{\wh{\tau}(t,x) \le \theta^{\control{u}} \}} \el{ \traj{X}{t,x}{\control{u}}{\wh{\tau}(t,x)}} + \ind{\{ \wh{\tau}(t,x) > \theta^{\control{u}} \}} V^{*}\big(\theta^{\control{u}} , \traj{X}{t,x}{\control{u}}{\theta^{\control{u}}}\big)\Big],
    \end{array} 
    \end{align}
    and
    \begin{align}
    \begin{array}{r}
        \label{DPP 1 sup}  V(t,x) \geq \sup_{\control{u} \in \controlmaps{U}_t} \EE \Big[\ind{ \{\wh{\tau}(t,x) \le \theta^{\control{u}} \}} \el{ \traj{X}{t,x}{\control{u}}{\wh{\tau}(t,x)}} + \ind{\{ \wh{\tau}(t,x) > \theta^{\control{u}} \}} V_*\big(\theta^{\control{u}} , \traj{X}{t,x}{\control{u}}{\theta^{\control{u}}}\big)\Big],
    \end{array} 
    \end{align}
    \end{subequations}
    where $V$ is the function defined in \eqref{V}.
\end{Thm}

\begin{proof}
The proof is inspired by the techniques developed in \cite{BouchardTouzi_WeakDPP}, however, in the context of exit-time problems where the continuity of the exit-time (Proposition \ref{prop:J1 semicontinuous}) plays a crucial role. 

We first assemble an appropriate covering for the set $\set{S}$, and use this covering to construct an admissible control which satisfies the required conditions within $\eps$ precision, $\eps > 0$ being pre-assigned and arbitrary. For notational simplicity, in the following we set $\theta \Let \theta^{\control{u}}$.

\paragraph{Proof of \eqref{DPP 1 sub}}
	In view of Lemma \ref{lem:Strong Markov} and the tower property of conditional expectation \cite[Theorem 5.1]{ref:Kallenberg-97}, for any $(t,x) \in \set{S}$ we have
\begin{align*}
    \E{ \el{ \traj{X}{t,x}{\control{u}}{\wh{\tau}(t,x)} } }& = \E{\E{ \el{ \traj{X}{t,x}{\control{u}}{\wh{\tau}(t,x)} } \big |~\sigalg_\theta}~ } \\
    & = \E{ \ind{ \{\wh{\tau}(t,x) \le \theta \}} \el{ \traj{X}{t,x}{\control{u}}{\wh{\tau}(t,x)} }+ \ind{\{ \wh{\tau}(t,x) > \theta \}} J\big(\theta,\traj{X}{t,x}{\control{u}}{\theta},\control{u_\theta} \big)} \\
    & \leq \E{ \ind{ \{\wh{\tau}(t,x) \le \theta \}} \el{ \traj{X}{t,x}{\control{u}}{\wh{\tau}(t,x)} } + \ind{\{ \wh{\tau}(t,x) > \theta \}} V^*\big(\theta,\traj{X}{t,x}{\control{u}}{\theta}\big)},
\end{align*}
	where $\control{u_\theta}$ is the random control as introduced in \eqref{u_theta}. Note that the last inequality follows from the fact that $\control{u_\theta} \in \controlmaps{U}_{\theta(\omega)}$ for each $\omega \in \Omega$. Now taking supremum over all admissible controls $\control{u} \in \controlmaps{U}_t$ leads to the desired dynamic programming inequality \eqref{DPP 1 sub}.
	
\paragraph{Proof of \eqref{DPP 1 sup}}
	Suppose $\phi : \set{S} \ra \R$ is uniformly bounded such that
\begin{equation}
    \label{phi USC}
    \phi \in \text{USC}(\set{S}) \quad \text{and} \quad \phi \leq V_* \qquad \text{on} \quad \set{S}.
\end{equation}
According to (\ref{phi USC}) and Proposition \ref{prop:J1 semicontinuous}, given $\eps>0$, for all $(t_0,x_0) \in \set{S}$ and $\control{u} \in \controlmaps{U}_{t_0}$ there exists $r_\eps>0$ such that
\begin{equation}
    \label{C1}
    \begin{array}{cc}
    \phi(t,x)-\eps \leq \phi(t_0,x_0) \leq  V_*(t_0,x_0), & \forall (t,x) \in \ball{C}_{r_\eps}(t_0,x_0) \cap \set{S},\\
    J\big(t_0,x_0,\control{u}\big) \leq J\big(t,x,\control{u}\big) + \eps, & \forall (t,x) \in \ball{C}_{r_\eps}(t_0,x_0) \cap \set{S},
    \end{array}
\end{equation}
where $\ball{C}_{r}(t,x)$ is a cylinder defined as:
\begin{equation}
    \label{cylinder def}
    \ball{C}_r(t,x) := \{ (s,y) \in \R\times \R^n ~|~ s \in ]t-r,t] ~,~ \|x-y\| < r \}.
\end{equation}
Moreover, by definition of \eqref{J} and \eqref{V}, given $\eps>0$ and $(t_0,x_0) \in \set{S}$ there exists $\control{u}^{t_0,x_0}_\eps \in \controlmaps{U}_{t_0}     $ such that
\begin{equation*}
    V_*(t_0,x_0) \leq V(t_0,x_0) \leq J\big(t_0,x_0,\control{u}^{t_0,x_0}_\eps\big) + \eps.
\end{equation*}
By the above inequality and \eqref{C1}, one can conclude that given $\eps>0$, for all $(t_0,x_0) \in \set{S}$ there exist $\control{u}^{t_0,x_0}_\eps \in \controlmaps{U}_{t_0}$ and $r_\eps(t_0,x_0)>~0$ such that
\begin{equation}
    \label{C2}
     \phi(t,x) -3\eps \leq J\big(t,x,\control{u}^{t_0,x_0}_\eps \big)  \quad\forall (t,x) \in \ball{C}_{r_\eps(t_0,x_0)}(t_0,x_0) \cap \set{S}.
\end{equation}
Therefore, given $\eps>0$, the family of cylinders $\big\{\ball{C}_{r_{\eps}(t,x)}(t,x) ~ : ~ (t,x) \in \set{S}, \quad r_\eps(t,x) > 0 \big\}$ forms an open covering of $[0,T[ \times \R^n$. By the \textit{Lindel\"of} covering Theorem \cite[Theorem 6.3 Chapter VIII]{Topology_Dugundji}, there exists a countable sequence $(t_i,x_i,r_i)_{i \in \N}$ of elements of $\set{S} \times \R^+$ such that
\begin{equation*}
    [0,T[\times\R^n \subset \bigcup_{i\in \N}\ball{C}_{r_i}(t_i,x_i).
\end{equation*}
Note that the implication of \eqref{DPP 1 sub} simply holds for $(t,x) \in \{T\} \times \R^n$.
Let us construct a sequence $(\ball{C}^i)_{i \in \N_0}$ as
\begin{align*}
     \ball{C}^0 := \{T\} \times \R^n, \qquad
     \ball{C}^{i} := \ball{C}_{r_{i}}(t_{i},x_{i}) \setminus \! \bigcup_{j \leq i-1} \ball{C}^j.
\end{align*}
By definition $\ball{C}^i$ are pairwise disjoint and $\set{S} \subset  \bigcup_{i \in \N_0} \ball{C}^i$. Furthermore, $(\theta,\traj{X}{t,x}{\control{u}}{\theta}) \in \bigcup_{i \in \N_0} \ball{C}^i$, and for all $i \in \N_0$ there exists $\control{u}^{t_i,x_i}_\eps \in \controlmaps{U}_{t_i}$ such that
\begin{equation}
     \label{C3}
     \phi(t,x) -3\eps \leq J\big(t,x,\control{u}^{t_i,x_i}_\eps\big),  \qquad  \forall (t,x) \in \ball{C}^i \cap \set{S}.
\end{equation}
To prove \eqref{DPP 1 sup}, let us fix $\control{u}\in \controlmaps{U}_t$ and $\theta \in \setofst{t,T}$. Given $\eps >0$ we define
\begin{equation}
   \label{u concatenated}
   \control{v}_{\eps} : = \ind{[t,\theta]}\control{u} +  \ind{]\theta,T]} \sum_{i \in \N_0} \ind{\ball{C}^i} (\theta,\traj{X}{t,x}{\control{u}}{\theta}) \control{u}^{t_i,x_i}_\eps.
\end{equation}
Notice that the set of admissible controls $\controlmaps{U}_t$ (i.e., the set of $\Filtration_t$-progressively measurable functions) is closed under countable concatenation operations, and consequently $\control{v}_{\eps} \in \controlmaps{U}_t$. In light of the alternative description \eqref{u_theta2} for the control \eqref{u concatenated}, one can apply Lemma \ref{lem:Strong Markov} in conjunction with \eqref{C3} and infer that with probability 1 we have 
\begin{align*}
    \EE \Big[ \ell\big( \traj{X}{t,x}{\control{v}_\eps}{\wh{\tau}(t,x)}\big) ~\big | ~\sigalg_{\theta} \Big] & = \ind{ \{\wh{\tau}(t,x) \le \theta \}} \el{ \traj{X}{t,x}{\control{u}}{\wh{\tau}(t,x)} } +
    \ind{\{ \wh{\tau}(t,x) > \theta \}} J\Big(\theta , \traj{X}{t,x}{\control{u}}{\theta}, \sum_{i \in \N_0} \ind{\ball{C}^i} (\theta,\traj{X}{t,x}{\control{u}}{\theta})\control{u}^{t_i,x_i}_\eps\Big)\\
    & = \ind{ \{\wh{\tau}(t,x) \le \theta \}} \el{ \traj{X}{t,x}{\control{u}}{\wh{\tau}(t,x)} } + \ind{\{ \wh{\tau}(t,x) > \theta \}} \sum_{i \in \N_0} J\big(\theta , \traj{X}{t,x}{\control{u}}{\theta}, \control{u}^{t_i,x_i}_\eps\big) \ind{\ball{C}^i} \big(\theta , \traj{X}{t,x}{\control{u}}{\theta} \big)\\
    & \geq \ind{ \{\wh{\tau}(t,x) \le \theta \}} \el{ \traj{X}{t,x}{\control{u}}{\wh{\tau}(t,x)}}+ \ind{\{ \wh{\tau}(t,x) > \theta\}} \sum_{i \in \N_0} \Big(\phi\big(\theta, \traj{X}{t,x}{\control{u}}{\theta}\big)-3\eps \Big) \ind{\ball{C}^i} \big(\theta , \traj{X}{t,x}{\control{u}}{\theta} \big)\\
    & = \ind{ \{\wh{\tau}(t,x) \le \theta\}} \el{ \traj{X}{t,x}{\control{u}}{\wh{\tau}(t,x)} } + \ind{\{ \wh{\tau}(t,x) > \theta\}}
    \Big( \phi\big(\theta , \traj{X}{t,x}{\control{u}}{\theta}\big) -3\eps \Big).
\end{align*}
By the definition of $V$ and the tower property of conditional expectations,
\begin{align*}
    V(t,x) & \geq J(t,x,\control{v}_\eps) = \E{\E{\el{ \traj{X}{t,x}{\control{v}_\eps}{\wh{\tau}(t,x)} } ~\big |~\sigalg_{\theta} }}\\
    & \geq \E{\ind{ \{\wh{\tau}(t,x) \le \theta \}} \el{ \traj{X}{t,x}{\control{u}}{\wh{\tau}(t,x)}} + \ind{\{ \wh{\tau}(t,x) > \theta \}} \phi\big(\theta , \traj{X}{t,x}{\control{u}}{\theta}\big) } -3\eps ~ \E{\ind{\{\wh{\tau}(t,x)> \theta\}}}.
\end{align*}
The arbitrariness of $\control{u} \in \controlmaps{U}_t$ and $\eps >0$ implies that
\begin{align*}
    V(t,x) \geq \sup_{\control{u} \in \controlmaps{U}_t} \E{\ind{ \{\wh{\tau}(t,x) \le \theta \}} \el{ \traj{X}{t,x}{\control{u}}{\wh{\tau}(t,x)}} + \phi\big(\theta , \traj{X}{t,x}{\control{u}}{\theta}\big)}.
\end{align*}
	It suffices to find a sequence of continuous functions $(\phi_i)_{i\in \N}$ such that $\Phi_i \leq V_*$ on $\set{S}$ and converges pointwise to $V_*$. The existence of such a sequence is guaranteed by \cite[Lemma 3.5 ]{Reny_PointwiseConvergence}. 
	Thus, by Fatou's lemma,
\begin{align*}
    V(t,x) & \geq \liminf_{i \ra \infty}\sup_{\control{u} \in \controlmaps{U}_t} \E{\ind{ \{\wh{\tau}(t,x) < \theta \}} \el{ \traj{X}{t,x}{\control{u}}{\wh{\tau}(t,x)} } + \ind{\{ \wh{\tau}(t,x) \geq \theta \}} \phi_i\big(\theta , \traj{X}{t,x}{\control{u}}{\theta}\big)}\\
    & \geq \sup_{\control{u} \in \controlmaps{U}_t} \E{\ind{ \{\wh{\tau}(t,x) < \theta \}} \el{ \traj{X}{t,x}{\control{u}}{\wh{\tau}(t,x)}} + \ind{\{ \wh{\tau}(t,x) \geq \theta \}} \liminf_{i \ra \infty} \phi_i\big(\theta , \traj{X}{t,x}{\control{u}}{\theta}\big)}\\
    & = \sup_{\control{u} \in \controlmaps{U}_t} \E{\ind{ \{\wh{\tau}(t,x) < \theta \}} \el{ \traj{X}{t,x}{\control{u}}{\wh{\tau}(t,x)} }+ \ind{\{ \wh{\tau}(t,x) \geq \theta \}} V_*\big(\theta , \traj{X}{t,x}{\control{u}}{\theta}\big)}.
\end{align*}
\end{proof}

\begin{Remark}[Measurability]
\label{rem:DPP}
The DPP in \eqref{DPP} is introduced in a weaker sense than the standard DPP for stochastic optimal control problems \cite{SonerBook}. Namely, one does not have to verify the measurability of the function $V$ in (\ref{V}) to apply \eqref{DPP}.
\end{Remark}


\subsection{Dynamic Programming Equation}\label{sec:PDE}
Our objective in this subsection is to demonstrate how the DPP derived in \secref{Section DPP 1} characterizes the function $V$ as a (discontinuous) viscosity solution to an appropriate HJB equation; for the general theory of viscosity solutions we refer to \cite{Crandali_Ishii_Lions_VoscositySolutions} and \cite{SonerBook}. To complete the PDE characterization and provide numerical solutions for this PDE, one also needs appropriate boundary conditions which will be the objective of the next subsection.

\begin{Def}[Dynkin operator]
\label{def:Dynkin operator}
Given $u\in \controlset{U}$, we denote by $\mathcal{L}^u$ the Dynkin operator (also known as the infinitesimal generator) associated to the controlled diffusion \eqref{SDE} as
\begin{align*}
 \mathcal{L}^u \Phi(t,x) := \partial_t \Phi(t,x) &+ f(x,u).\partial_x \Phi(t,x) + \frac{1}{2}\text{Tr}[\sigma \sigma^\top(x,u) \partial_{x}^2 \Phi(t,x)],
\end{align*}
where $\Phi$ is a real-valued function smooth on the interior of $\set{S}$, with $\partial_t \Phi$ and $\partial_x \Phi$ denoting the partial derivatives with respect to $t$ and $x$ respectively, and $\partial^2_x \Phi$ denoting the Hessian matrix with respect to $x$.
\end{Def}


\begin{Thm}[Dynamic Programming Equation]
\label{Theorem DPE 1}
Consider the system \eqref{SDE} and suppose that Assumption \ref{a:exit time} holds. Then,
\begin{itemize}[label=$\circ$, leftmargin=*]
	\item the lower semicontinuous envelope of $V$ introduced in \eqref{V} is a viscosity supersolution of
		\begin{equation*}
		    \label{supersolution V1}
		    -\sup_{u \in \controlset{U}} \mathcal{L}^u V_*(t,x) \geq 0  \qquad \text{on} \quad [0,T[ \times {\ol{O}}^c,
		\end{equation*}
	\item the upper semicontinuous envelope of $V$ is a viscosity subsolution of
		\begin{equation*}
		 \label{supersolution}
		 -\sup_{u \in \controlset{U}} \mathcal{L}^u V^{*}(t,x) \leq 0  \qquad \text{on} \quad [0,T[\times {\ol{O}}^c,
		\end{equation*}
\end{itemize}
\end{Thm}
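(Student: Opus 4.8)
The plan is to derive both the supersolution and subsolution properties from the two inequalities of the DPP in Theorem~\ref{Theorem DPP 1}, following the classical viscosity-solution argument adapted to the weak DPP of \cite{BouchardTouzi_WeakDPP}. Fix an interior point $(t_0,x_0) \in [0,T[\times \ol{O}^c$. Since $O$ is (up to closure) the exit set, for $x_0 \notin \ol{O}$ the exit time $\bar\tau(t,x)$ is strictly positive for $(t,x)$ near $(t_0,x_0)$, and by the $\PP$-a.s.\ continuity of $(t,x)\mapsto\bar\tau(t,x)$ established in Proposition~\ref{prop:J1 semicontinuous}, one can choose a small cylinder $\ball{C}_r(t_0,x_0)$ and a deterministic stopping time $\theta_h \Let (t_0+h)\mn \bar\tau(t_0,x_0)$ (or the first exit time of the process from the cylinder, intersected with $t_0+h$) such that $\PP$-a.s.\ on this event $\bar\tau(t,x) > \theta_h$; this is what makes the indicator $\ind{\{\bar\tau(t,x)>\theta\}}$ equal to $1$ and collapses the DPP inequalities into the usual form involving only $V^*(\theta,X_\theta)$ or $V_*(\theta,X_\theta)$.

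For the supersolution property, suppose for contradiction that $V_*$ is not a supersolution at $(t_0,x_0)$: there is a smooth $\varphi$ with $\varphi \le V_*$, $\varphi(t_0,x_0)=V_*(t_0,x_0)$, and $-\sup_{u}\mathcal{L}^u\varphi(t_0,x_0) < 0$, hence by continuity $\mathcal{L}^{u}\varphi > \eta > 0$ for every $u$ on a small cylinder, for some $\eta>0$. Pick a sequence $(t_n,x_n)\to(t_0,x_0)$ with $V(t_n,x_n)\to V_*(t_0,x_0)$. Apply the lower DPP \eqref{DPP 1 sup} at $(t_n,x_n)$ with $\theta$ the exit time of $(s,X_s)$ from the cylinder capped at the cylinder height; on the high-probability event the ``$\le\theta$'' term does not contribute, so $V(t_n,x_n) \ge \sup_{\control u}\E{V_*(\theta, X_\theta)} \ge \sup_{\control u}\E{\varphi(\theta,X_\theta)}$. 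Then Dynkin's formula gives $\E{\varphi(\theta,X_\theta)} = \varphi(t_n,x_n) + \E{\int_{t_n}^\theta \mathcal{L}^{u_s}\varphi\,\diff s} \ge \varphi(t_n,x_n) + \eta\,\E{\theta - t_n}$, and a uniform lower bound on $\E{\theta-t_n}$ (from non-degeneracy, Assumption~\ref{a:exit time}.\ref{a:exit time:degenerate}, the process leaves the cylinder neither too fast nor by hitting $O$) yields $V(t_n,x_n) \ge \varphi(t_n,x_n) + c\eta$ for a fixed $c>0$; passing to the limit contradicts $\varphi(t_0,x_0)=V_*(t_0,x_0) = \lim V(t_n,x_n)$.

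For the subsolution property, suppose $V^*$ is not a subsolution at $(t_0,x_0)$: there is smooth $\varphi \ge V^*$ with $\varphi(t_0,x_0)=V^*(t_0,x_0)$ and $-\sup_u\mathcal{L}^u\varphi(t_0,x_0) > 0$, so $\mathcal{L}^u\varphi < -\eta < 0$ for all $u$ on a small cylinder. Take $(t_n,x_n)\to(t_0,x_0)$ with $V(t_n,x_n)\to V^*(t_0,x_0)$; after subtracting a quadratic perturbation we may assume $\varphi - V^* \ge \gamma > 0$ on the boundary of the cylinder. Apply the upper DPP \eqref{DPP 1 sub}: $V(t_n,x_n) \le \sup_{\control u}\E{V^*(\theta,X_\theta)} \le \sup_{\control u}\E{\varphi(\theta,X_\theta)} - \gamma$ on the relevant event, and Dynkin again gives $\E{\varphi(\theta,X_\theta)} = \varphi(t_n,x_n) + \E{\int \mathcal{L}^{u_s}\varphi\,\diff s} \le \varphi(t_n,x_n)$, whence $V(t_n,x_n) \le \varphi(t_n,x_n) - \gamma$; letting $n\to\infty$ contradicts $\varphi(t_0,x_0)=V^*(t_0,x_0)$.

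The main obstacle, and the place where the discontinuity of $\ell$ and the weak (envelope-valued) form of the DPP genuinely matter, is justifying that on a cylinder strictly inside $[0,T[\times\ol{O}^c$ one can replace the full DPP right-hand side by the pure $V^*$/$V_*$ term: one must show that with the chosen stopping time $\theta$, $\PP$-a.s.\ $\bar\tau(t_n,x_n) > \theta$ (equivalently $\tau_O(t_n,x_n) > \theta$) for all large $n$, which rests on the $\PP$-a.s.\ continuity of $\bar\tau$ from Proposition~\ref{prop:J1 semicontinuous} together with the fact that $(t_0,x_0)$ lies in the open complement of $\ol O$, so that $\bar\tau(t_0,x_0)>t_0$. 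A secondary technical point is the uniform (in $n$ and in the control) lower bound $\E{\theta - t_n}\ge c>0$ needed in the supersolution step; this follows from the non-degeneracy assumption, which forces the diffusion to spend a nontrivial expected time before exiting a fixed-radius cylinder, and from the cone condition ensuring the exit behaviour of $\tau_O$ is regular. Everything else is the standard Dynkin-formula-plus-contradiction machinery of \cite{Crandali_Ishii_Lions_VoscositySolutions,SonerBook}.
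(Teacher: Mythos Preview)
Your approach is essentially the same as the paper's: both parts are proved by contradiction from the weak DPP of Theorem~\ref{Theorem DPP 1}, using a test function, a local cylinder (or ball) contained in $[0,T[\times\ol{O}^c$, the exit time $\theta$ from that neighborhood, and Dynkin's formula, together with a sequence $(t_n,x_n)\to(t_0,x_0)$ along which $V(t_n,x_n)$ converges to the appropriate envelope. The paper chooses $r$ small enough that $\ball{B}_r(t_0,x_0)\subset [0,T[\times\ol{O}^c$, so that $\theta<\bar\tau$ holds $\PP$-a.s.\ simply because the process cannot touch $O$ before leaving the ball; there is no need to invoke the $\PP$-a.s.\ continuity of $\bar\tau$ from Proposition~\ref{prop:J1 semicontinuous} for this step, nor the non-degeneracy assumption for the lower bound on $\EE[\theta-t_n]$ (path continuity alone gives $\theta(t_0,x_0)>t_0$ a.s., and the exit time from a \emph{fixed} ball depends continuously on the starting point, so $\EE[\theta(t_n,x_n)]-t_n$ stays bounded away from zero). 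Your invocation of these extra ingredients is harmless but unnecessary.

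There is, however, a genuine slip in your supersolution paragraph. From $-\sup_{u}\mathcal{L}^u\varphi(t_0,x_0)<0$ you conclude ``$\mathcal{L}^{u}\varphi>\eta>0$ for \emph{every} $u$ on a small cylinder''. This is false: $\sup_u\mathcal{L}^u\varphi(t_0,x_0)>0$ only guarantees \emph{one} $u_0\in\controlset{U}$ with $\mathcal{L}^{u_0}\varphi(t_0,x_0)>0$, and by continuity in $(t,x)$ this single $u_0$ satisfies $\mathcal{L}^{u_0}\varphi>\eta$ on a neighborhood. The paper proceeds exactly this way, running the process under the constant control $u_0$; since the DPP inequality \eqref{DPP 1 sup} contains a supremum over $\controlmaps{U}_t$, this constant control is admissible and the contradiction goes through. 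Your subsequent Dynkin estimate would fail for a generic control $\control{u}$ unless you fix this quantifier. (Conversely, in the subsolution part the quantifier \emph{is} universal---$\sup_u\mathcal{L}^u\varphi(t_0,x_0)<0$ gives $\mathcal{L}^u\varphi<-\eta$ for all $u$ by compactness of $\controlset{U}$ and joint continuity---and there your statement is correct and matches the paper, including the strict-maximum/$\gamma$-gap device on $\partial\ball{B}_r(t_0,x_0)$.)
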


\begin{proof}
We first prove the supersolution part:

\textbf{Supersolution:} For the sake of contradiction, assume that there exists $(t_0,x_0) \in [0,T[ \times \ol{O}^c$ and a smooth function $\phi: \set{S} \ra \R$ satisfying
\begin{align*}
    \min_{(t,x) \in \set{S}}\big(V_*-\phi \big)(t,x) = \big(V_*-\phi \big)(t_0,x_0) = 0
\end{align*}
such that for some $\delta>0$
\begin{equation*}
    -\sup_{u \in \controlset{U}} \mathcal{L}^u \phi(t_0,x_0) < -2\delta
\end{equation*}
Notice that, without loss of generality, one can assume that $(t_0,x_0)$ is the strict minimizer of $V_*-\phi$ \cite[Lemma II 6.1, p.\ 87]{SonerBook}. Since $\phi$ is smooth, the map $(t,x) \mapsto \mathcal{L}^u \phi (t,x)$ is continuous. Therefore, there exist $u \in \controlset{U}$ and $r>0$ such that $\ball{B}_r(t_0,x_0) \subset [0,T) \times \ol{O}^c$ and
\begin{equation}
    \label{C4}
    \begin{array}{cc}
    -\mathcal{L}^u \phi(t,x) < -\delta & \forall (t,x) \in \ball{B}_r(t_0,x_0).
    \end{array}
\end{equation}
Let us define the stopping time $\theta(t,x)\in \setofst{t,T}$
\begin{equation}
    \label{theta}
    \theta(t,x) = \inf\{ s\geq t ~:~ (s,\traj{X}{t,x}{u}{s}) \notin \ball{B}_r(t_0,x_0) \},
\end{equation}
where $(t,x) \in \ball{B}_r(t_0,x_0)$.  Note that by continuity of solutions to \eqref{SDE}, $t < \theta(t,x) < T$ $\PP$- a.s.\ for all $(t,x) \in \ball{B}_r(t_0,x_0)$. Moreover, selecting $r>0$ sufficiently small so that $\theta(t,x)<\tau_{O}$, we have
\begin{equation}
    \label{theta property}
    \begin{array}{ccc}
    \theta(t,x) < \tau_{O} \mn T = \wh{\tau}(t,x)  &  \PP\text{-a.s.} & \forall (t,x) \in \ball{B}_r(t_0,x_0)
    \end{array}
\end{equation}
Applying It\^{o}'s formula and using \eqref{C4}, we see that for all $(t,x) \in \ball{B}_r(t_0,x_0)$,
\begin{align*}
    \phi(t,x) & = \E{\phi\big(\theta(t,x),\traj{X}{t,x}{u}{\theta(t,x)}\big) + \int^{\theta(t,x)}_{t}-\mathcal{L}^u \phi\big(s,\traj{X}{t,x}{u}{s}\big) ds } \\
                            & \leq \E{\phi\big(\theta(t,x),\traj{X}{t,x}{u}{\theta(t,x)}\big)} - \delta (\E{\theta(t,x)}-t) \\
                            & < \E{\phi\big(\theta(t,x),\traj{X}{t,x}{u}{\theta(t,x)}\big)}.
\end{align*}
Now it suffices to take a sequence $(t_n,x_n,V(t_n,x_n))_{n \in \N}$ converging to $(t_0,x_0,V_*(t_0,x_0))$ to see that
\begin{equation*}
\phi(t_n,x_n) \ra \phi(t_0,x_0) = V_*(t_0,x_0).
\end{equation*}
Therefore, for sufficiently large $n$ we have
\begin{align*}
    V(t_n,x_n) < \E{\phi\big(\theta(t_n,x_n),\traj{X}{t_n,x_n}{u}{\theta(t_n,x_n)}\big)} < \E{V_*\big(\theta(t_n,x_n),\traj{X}{t_n,x_n}{u}{\theta(t_n,x_n)}\big)},
\end{align*}
which, in accordance with \eqref{theta property}, can be expressed as
\begin{align*}
    V(t_n,x_n) < \EE\Big[& \ind{\{\wh{\tau}(t_n,x_n) < \theta(t_n,x_n) \}} \el{ \traj{X}{t_n,x_n}{u}{\wh{\tau}(t_n,x_n)} } + \ind{\{ \wh{\tau}(t_n,x_n) \geq \theta(t_n,x_n) \}} V_*\big(\theta , \traj{X}{t_n,x_n}{u}{\theta(t_n,x_n)}\big)\Big].
\end{align*}
This contradicts the DPP in \eqref{DPP 1 sup}.

\textbf{Subsolution:} The subsolution property is proved in a fashion similar to the supersolution part but with slightly more care. For the sake of contradiction, assume that there exists $(t_0,x_0) \in [0,T[ \times \ol{O}^c$ and a smooth function $\phi: \set{S} \ra \R$ satisfying
\begin{align*}
    \max_{(t,x) \in \set{S}}\big(V^*-\phi \big)(t,x) = \big(V^*-\phi \big)(t_0,x_0) = 0
\end{align*}
such that for some $\delta>0$
\begin{equation*}
    -\sup_{u \in \controlset{U}} \mathcal{L}^u \phi(t_0,x_0) > 2\delta.
\end{equation*}
By continuity of the mapping $(t,x,u) \mapsto \mathcal{L}^u \phi (t,x)$ and compactness of the control set $\set{U}$, there exists $r>0$ such that for all $u \in \controlset{U}$
\begin{equation}
    \label{C5}
    \begin{array}{cc}
    -\mathcal{L}^u \phi(t,x) > \delta, & \forall (t,x) \in \ball{B}_r(t_0,x_0),
    \end{array}
\end{equation}
where  $\ball{B}_r(t_0,x_0) \subset [0,T) \times \ol{O}^c$. Note as in the preceding part, $(t_0,x_0)$ can be considered as the strict maximizer of $V^* - \phi$ that consequently implies that there exists $\gamma > 0$ such that 
\begin{equation}\label{C6}	
	\big(V^* - \phi \big)(t,x) < -\gamma, \qquad \forall (t,x) \in \partial \ball{B}_r(t_0,x_0).
\end{equation}
where $\partial \ball{B}_r(t_0,x_0)$ stands for the boundary of the ball $\ball{B}_r(t_0,x_0)$. Let $\theta(t,x) \in \setofst{t,T}$ be the stopping time defined in \eqref{theta}; notice that $\theta$ may, of course, depend on the policy $\control{u}$. Applying It\^{o}'s formula and using \eqref{C5}, one can observe that given $\control{u} \in \controlmaps{U}_t$,
\begin{align*}
    \phi(t,x) & = \E{\phi\big(\theta(t,x),\traj{X}{t,x}{\control{u}}{\theta(t,x)}\big) + \int^{\theta(t,x)}_{t}-\mathcal{L}^{u_s} \phi\big(s,\traj{X}{t,x}{\control{u}}{s}\big) ds } \\
                            & \geq \E{\phi\big(\theta(t,x),\traj{X}{t,x}{\control{u}}{\theta(t,x)}\big)} + \delta (\E{\theta(t,x)}-t) \\
                            & > \E{\phi\big(\theta(t,x),\traj{X}{t,x}{\control{u}}{\theta(t,x)}\big)}.
\end{align*}
Now it suffices to take a sequence $(t_n,x_n,V(t_n,x_n))_{n \in \N}$ converging to $(t_0,x_0,V^*(t_0,x_0))$ to see that
\begin{equation*}
\phi(t_n,x_n) \ra \phi(t_0,x_0) = V^*(t_0,x_0).
\end{equation*}
As argued in the supersolution part above, for sufficiently large $n$, for given $\control{u} \in \controlmaps{U}_t$,
\begin{align*}
    V(t_n,x_n) > \E{\phi\big(\theta(t_n,x_n),\traj{X}{t_n,x_n}{\control{u}}{\theta(t_n,x_n)}\big)} > \E{V^*\big(\theta(t_n,x_n),\traj{X}{t_n,x_n}{\control{u}}{\theta(t_n,x_n)}\big)} +\gamma,
\end{align*}
where the last inequality is deduced from the fact that $\big(\theta(t_n,x_n),\traj{X}{t_n,x_n}{\control{u}}{\theta(t_n,x_n)}\big) \in \partial \ball{B}_r(t_0,x_0)$ together with \eqref{C6}. Thus, in view of \eqref{theta property}, we arrive at 
\begin{align*}
    V(t_n,x_n) > \EE\Big[& \ind{ \{\wh{\tau}(t,x) < \theta(t_n,x_n) \}} \el{ \traj{X}{t_n,x_n}{\control{u}}{\wh{\tau}}} + \ind{\{ \wh{\tau}(t,x) \geq \theta(t_n,x_n) \}} V^*\big(\theta , \traj{X}{t_n,x_n}{\control{u}}{\theta(t_n,x_n)}\big)\Big] + \gamma.
\end{align*}
This contradicts the DPP in \eqref{DPP 1 sub} as $\gamma$ is chosen uniformly with respect to $\control{u} \in \controlmaps{U}_t$.
\end{proof}

\subsection{Boundary conditions}
Before proceeding with the main result of this subsection on boundary conditions, we need a preparatory result that indeed has a stronger assertion than Proposition \ref{prop:J1 semicontinuous}.

\begin{Pro}[Uniform continuity]
\label{prop:unif cont}
	Under the same hypothesis of Proposition \ref{prop:J1 semicontinuous}, for any sequence of control policies $(\control{u_n})_{n\in \N} \subset \controlmaps{U}_t$ and initial conditions $(t_n,x_n) \ra (t,x)$, we have
		\begin{align*}
			\lim_{n \ra \infty} \Big \| \traj{X}{t,x}{\control{u}_n}{\wh{\tau}(t,x)} - \traj{X}{t_n,x_n}{\control{u}_n}{\wh \tau (t_n,x_n)} \Big \| = 0, \qquad \PP \text{-a.s.}, 
		\end{align*}
		where the stopping time $\wh \tau$ is introduced in \eqref{V}.
\end{Pro}

\begin{proof}
The proof follows the same lines as in the proof of Proposition \ref{prop:J1 semicontinuous}, but in a uniform fashion with respect to admissible control inputs; see \ref{app:B} for the details. 
\end{proof}

The following theorem provides boundary conditions for the function $V$ both in viscosity and Dirichlet (pointwise) senses:

\begin{Thm}[Boundary conditions]
	\label{thm:boundary}
	Suppose that the condition of Theorem \ref{Theorem DPE 1} holds. Then the function $V$ in \eqref{V} satisfies the following boundary value conditions:
		\begin{subequations}
		\label{boundary}		
			\begin{align}
			\label{boundary pointwise} 
			\text{Dirichlet:}&  \quad 
			\begin{cases} 
			{V}(t,x) = \ell(x) \\
			\forall (t,x) \in  [0,T] \times  \ol O \bigcup \{T\} \times \R^n \\
			\end{cases} \\
			\label{boundary visc} 
			\text{Viscosity:}&  \quad 
			\begin{cases}
			\limsup\limits_{\footnotesize \begin{smallmatrix} (\ol{O})^c \ni x' \ra x \\ t' \ua t \end{smallmatrix}} {V}(t',x') \le  \ell^*(x) \\
				\liminf\limits_{\footnotesize \begin{smallmatrix}  (\ol{O})^c  \ni x' \ra x \\ t' \ua t
				\end{smallmatrix}}{V}(t',x') \ge  \ell(x) \\
				\forall (t,x) \in [0,T] \times \partial O \bigcup \{T\} \times \R^n 
			\end{cases}
			\end{align}
		\end{subequations}
\end{Thm}

\begin{proof}
In light of \cite[Corollary 3.2, p.\ 65]{ref:Bass-1998}, Assumptions \ref{a:exit time}.\ref{a:exit time:degenerate} and \ref{a:exit time}.\ref{a:exit time:Set condition} ensure that 
	\begin{equation*}
	    \wh{\tau}(t,x) = t, \quad \forall (t,x) \in [0,T] \times \ol{O} \cup \{T\} \times \R^n  \qquad \PP \text{-a.s.}
	\end{equation*}
which readily implies the pointwise boundary condition \eqref{boundary pointwise}. To prove the discontinuous viscosity boundary condition \eqref{boundary visc}, we only show the first assertion; the second one follows from similar arguments. Let $(t,x) \in [0,T] \times  \partial O \bigcup \{T\} \times \R^n$ and $(t_n,x_n) \ra (t,x)$, where $t_n < T$ and $x \in (\ol{O})^c$. In the definition of $V$ in \eqref{V}, one can choose a sequence of policies that is increasing and attains the supremum value. This sequence, of course, depends on the initial condition. Thus, let us denote it via two indices $(\control{u}_{n,j})_{j \in \N}$ as a sequence of policies corresponding to the initial condition $(t_n,x_n)$ corresponding to the value $V(t_n,x_n)$. In this light, there exists a subsequence of $(\control{u}_{n_j})_{j \in \N}$ such that
	\begin{subequations}
		\begin{align}
			V^*(t,x) & = \lim_{n\ra \infty} V(t_n,x_n)  = \lim_{n \ra \infty}\lim_{j \ra \infty} \EE \Big[ \ell \big(\traj{X}{t_n,x_n}{\control{u}_{n,j}}{\wh{\tau}(t_n,x_n)}\big) \Big] \nonumber\\
			& \label{lem 1} \le \lim_{j \ra \infty} \EE \Big[ \ell \big( \traj{X}{t_{j},x_{j}}{\control{u}_{n_j}}{\wh{\tau}(t_{j},x_{j})} \big) \Big] 
			 \le  \EE \Big[\lim_{j \ra \infty} \ell \big( \traj{X}{t_{j},x_{j}}{\control{u}_{n_j}}{\wh{\tau}(t_{j},x_{j})} \big) \Big] \\
			 & \label{lem 2} \le \ell^*(x)
		\end{align}
	\end{subequations}
where the second inequality in \eqref{lem 1} follow from Fatou's lemma, and \eqref{lem 2} if the consequence of the almost sure uniform continuity assertion in Proposition \ref{prop:unif cont}. Let us recall that $\wh \tau(t,x) = t$ and consequently $\traj{X}{t,x}{\control{u}_{n_j}}{\wh \tau(t,x)} = x$.
\end{proof}

Theorem \ref{thm:boundary} provides boundary condition for $V$ in both Dirichlet (pointwise) and viscosity senses. The Dirichlet boundary condition \eqref{boundary pointwise} is the one usually employed to numerically compute the solution via PDE solvers, whereas the viscosity boundary condition \eqref{boundary visc} is required for theoretical support of the numerical schemes and comparison results. 

\section{Connection Between the Reach-Avoid Problem and PDE Characterization}
\label{sec:connection 3 to 4}
In this section we draw a connection between the reach-avoid problem of \secref{sec:ProblemStatement} and the stochastic optimal control problems stated in \secref{sec:connection}. This connection for the problem of reach-avoid at the terminal time $T$ (Definition \ref{def:RA terminal}) is straightforward, as it only suffices to ensure that the target set $A$ is open and the avoid set $B$ is closed. Namely, set $B$ being closed fulfills the requirement of Proposition \ref{prop:Reach-Terminal} that bridges the problem $\wt \RA$ to optimal control $\wt V_1$ in \eqref{Vtilde 1}. On the other hand, set $A$ being open guarantees that the payoff function $\ind{A}$ meets the lower semicontinuity of Assumption~\ref{a:exit time}\ref{a:exit time:l lsc}, which allows to deploy the PDE characterization developed in \secref{sec:Alternative Characterization} (i.e., Theorem \ref{Theorem DPE 1} together with boundary conditions in Theorem \ref{thm:boundary}) to approach $\wt V_1$ in \eqref{Vtilde 1} for numerical purposes.

However, the above discussion does not immediately apply to the reach-avoid problem within $[t,T]$ (Definition \ref{def:RA within}). That is, Proposition \ref{prop:Reach-V1} imposes a constraint on both sets $A$ and $B$ to be closed, which is clearly in contradiction with the lower semicontinuity of the payoff function $\ell$ in \eqref{V}. 

To achieve a reconciliation between the two sets of hypotheses in case of Definition \ref{def:RA within}, given closed sets $A$ and $B$, we construct a smaller set $A_\eps\subset A^\circ$ where $A_\eps \Let \{x\in A\mid \dist(x, A^c) \ge \eps\}$ \footnote{$\dist(x,A) \Let \inf_{y \in A}\|x - y\|$, where $\|\cdot\|$ stands for the Euclidean norm.} and $A_\eps$ satisfies Assumption~\ref{a:exit time}.\ref{a:exit time:Set condition} Note that this is always possible if $O \Let A\cup B$ satisfies Assumption \ref{a:exit time}.\ref{a:exit time:Set condition}---indeed, simply take $\eps < h/2$ to see this, where $h$ is as defined in Assumption~\ref{a:exit time}.\ref{a:exit time:Set condition} Figure \ref{Fig:EpsPrecision} depicts this case.
	\begin{figure}
		\begin{center}
			\includegraphics[scale = .9]{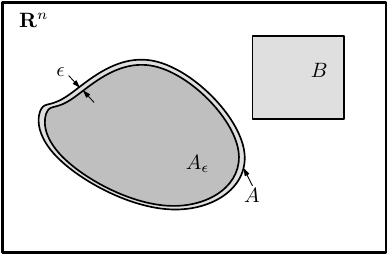}
		\end{center}
		\caption{Construction of the sets $A_\eps$ from $A$ as described in \secref{sec:connection 3 to 4}.}
		\label{Fig:EpsPrecision}
	\end{figure}
	To be precise, we define
	\begin{equation}
        \label{V eps}
		V_\eps(t, x) \Let \sup_{\control{u}\in\controlmaps{U}_t}\E{\ell_\eps \bigl(\traj{X}{t, x}{\control{u}}{\tau_\eps}\bigr)},\qquad \tau_\eps\Let \tau_{A_\eps\cup B} \mn T,
    \end{equation}
	where the function $\ell_\eps :\R^n \ra \R$ is defined as 
	\begin{align*}
		\ell_\eps(x) \Let\biggl( 1 - \frac{\dist(x,A_\eps)}{\eps}\biggr) \mx 0 .
	\end{align*}

The following result asserts that the above technique affords a conservative but arbitrarily precise way of characterizing the solution of the reach-avoid problem defined in Definition \ref{def:RA within} in the framework of \secref{sec:Alternative Characterization}.

    \begin{Thm}[Approximation stability]
	\label{thm:approx}
			Consider the system \eqref{SDE}, and suppose the sets $A, B$ are closed and Assumptions \ref{a:exit time}.\ref{a:exit time:degenerate} and \ref{a:exit time}.\ref{a:exit time:Set condition} hold. For all $(t,x) \in [t,T[ \times \R^n$ and $\eps_1 \ge  \eps_2 >0$, we have $V_{\eps_2}(t,x) \ge V_{\eps_1}(t,x)$, and $V(t,x) = \lim_{\eps \da 0} V_\eps (t,x)$ where the functions $V$ and $V_\eps$ are defined as \eqref{V1} and \eqref{V eps}, respectively.
    \end{Thm}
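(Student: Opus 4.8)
The plan is to squeeze $V_\eps$ between two quantities that both converge to $V(t,x)=\sup_{\control{u}\in\controlmaps{U}_t}\PP_{t,x}^{\control{u}}(\tau_A<\tau_B,\ \tau_A\le T)$ --- the identity being Proposition \ref{prop:Reach-V1} --- and to read off the monotonicity from a pathwise comparison of the stopped payoffs; throughout one takes $\eps<h/2$, so that $A_\eps$ and $A_\eps\cup B$ satisfy the interior cone condition, and writes $\tau_\eps:=\tau_{A_\eps\cup B}\mn T$ and $\bar\tau:=\tau_{A\cup B}\mn T$. I would first record that for $y\in A_\eps=\{x\in A\mid\dist(x,A^c)\ge\eps\}$ the ball $\ball{B}_\eps(y)$ lies in $A^\circ$, so the support $\{\dist(\cdot,A_\eps)<\eps\}$ of $\ell_\eps$ sits inside $A^\circ$ and $0\le\ell_\eps\le\ind{A}$ with $\ell_\eps\equiv0$ on $B$. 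Inspecting $\traj{X}{t,x}{\control{u}}{\tau_\eps}$ via \eqref{rem:A cup B}--\eqref{rem:exit time closed set} and $A\cap B=\emptyset$, one checks $\{\ell_\eps(\traj{X}{t,x}{\control{u}}{\tau_\eps})>0\}\subseteq\{\tau_A<\tau_B\}\cap\{\tau_A\le T\}$, hence $\E{\ell_\eps(\traj{X}{t,x}{\control{u}}{\tau_\eps})}\le\PP_{t,x}^{\control{u}}(\tau_A<\tau_B,\tau_A\le T)=\E{\ind{A}(\traj{X}{t,x}{\control{u}}{\bar\tau})}\le V(t,x)$, and a supremum over $\control{u}$ gives $V_\eps\le V$.

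For monotonicity the key point is the geometric identity $\dist(x,A_\eps)=(\eps-\dist(x,A^c))^{+}$ for $x\in A$: ``$\ge$'' is the $1$-Lipschitz continuity of $\dist(\cdot,A^c)$, and ``$\le$'' I would get from the interior cone condition, which lets one displace $x$ to a point of depth exactly $\eps$ at distance $\eps-\dist(x,A^c)$. This gives $\ell_\eps(x)=\min\{1,\dist(x,A^c)/\eps\}$ on $A$ and $\ell_\eps\equiv0$ off $A$, so $\ell_{\eps_1}\le\ell_{\eps_2}$ pointwise when $\eps_1\ge\eps_2$. Fixing $\control{u}\in\controlmaps{U}_t$ and noting $A_{\eps_1}\subseteq A_{\eps_2}$ forces $\tau_{\eps_2}\le\tau_{\eps_1}$, I would split $\Omega$: on $\{\tau_{\eps_1}=\tau_{\eps_2}\}$ the two stopped states coincide and the pointwise bound applies; on $\{\tau_{\eps_1}>\tau_{\eps_2}\}$ one verifies $\traj{X}{t,x}{\control{u}}{\tau_{\eps_2}}\in A_{\eps_2}$ (the process has reached the larger target, not $B$, before time $T$), so $\ell_{\eps_2}(\traj{X}{t,x}{\control{u}}{\tau_{\eps_2}})=1\ge\ell_{\eps_1}(\traj{X}{t,x}{\control{u}}{\tau_{\eps_1}})$. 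Summing the two pieces and taking $\sup_{\control{u}}$ yields $V_{\eps_1}\le V_{\eps_2}$.

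For convergence, monotonicity makes $\ol{V}(t,x):=\lim_{\eps\da0}V_\eps(t,x)$ exist, with $\ol{V}\le V$ by the first step. Given $\delta>0$ I would pick $\control{u}$ with $\PP_{t,x}^{\control{u}}(\tau_A<\tau_B,\tau_A\le T)\ge V(t,x)-\delta$; then $\ell_\eps\ge\ind{A_\eps}$ gives $V_\eps(t,x)\ge\PP(\traj{X}{t,x}{\control{u}}{\tau_\eps}\in A_\eps)=\PP(G_\eps)$ with $G_\eps:=\{\tau_{A_\eps}\le\tau_B\mn T\}$, and $G_\eps$ increases as $\eps\da0$ because $A_\eps\ua A^\circ$. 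Using the theorem's non-degeneracy and cone hypotheses together with \cite[Corollary 3.2, p.\ 65]{ref:Bass-1998} --- the non-degenerate process enters $A^\circ$ immediately upon hitting $A$, and $\tau_A$ carries no atom at $T$ --- one sees that $\PP$-a.s.\ on $\{\tau_A<\tau_B,\tau_A\le T\}$ the path visits $A^\circ$ at some instant in $(\tau_A,\tau_B\mn T)$ and therefore lies in $A_\eps$ for all small $\eps$, i.e.\ $\{\tau_A<\tau_B,\tau_A\le T\}\subseteq\bigcup_{\eps>0}G_\eps$ up to a null set. Continuity of $\PP$ along this increasing family then gives $\ol{V}(t,x)\ge\lim_{\eps\da0}\PP(G_\eps)=\PP\bigl(\bigcup_{\eps>0}G_\eps\bigr)\ge V(t,x)-\delta$, and letting $\delta\da0$, combined with $\ol{V}\le V$, completes the proof.

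I expect the main obstacle to be the geometric identity $\dist(x,A_\eps)=(\eps-\dist(x,A^c))^{+}$ on $A$ (equivalently, monotonicity of $\eps\mapsto\ell_\eps(x)$): the easy half is Lipschitz continuity of the distance function, but the other half genuinely needs the interior cone condition --- for an irregular $A$, such as a slit-type domain, the nearest point of $A_{\eps_2}$ to $x$ may already lie in $A_{\eps_1}$, which forces $\ell_{\eps_1}(x)>\ell_{\eps_2}(x)$ and breaks the monotonicity. The second ingredient --- immediate entry of the diffusion into $A^\circ$ and absence of atoms of hitting times --- is the same non-degeneracy-plus-cone regularity already invoked throughout \secref{sec:Alternative Characterization}; it transfers from $O=A\cup B$ to $A$ because $\dist(A,B)>0$ makes $(A\cup B)^\circ$ agree with $A^\circ$ near $\partial A$.
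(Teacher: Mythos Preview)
Your overall plan mirrors the paper's proof closely: both use a pathwise comparison $\ell_{\eps_2}\bigl(\traj{X}{t,x}{\control{u}}{\tau_{\eps_2}}\bigr)\ge\ell_{\eps_1}\bigl(\traj{X}{t,x}{\control{u}}{\tau_{\eps_1}}\bigr)$ for monotonicity (the paper splits on $\{\ell_{\eps_2}<1\}$ versus $\{\ell_{\eps_2}=1\}$, you on $\{\tau_{\eps_1}=\tau_{\eps_2}\}$ versus $\{\tau_{\eps_1}>\tau_{\eps_2}\}$, which amounts to the same thing), and both invoke non-degeneracy and the cone condition via \cite[Corollary 3.2]{ref:Bass-1998} to get $\tau_{A^\circ}=\tau_A$ a.s.\ and $\PP\{\tau_A=T\}=0$ for the convergence. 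Your sandwiching via $\ind{A_\eps}\le\ell_\eps\le\ind{A}$ is a clean variant of the paper's direct estimate of $V-\lim V_{\eps_n}$.

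There is, however, a genuine gap exactly where you predicted. The identity $\dist(x,A_\eps)=(\eps-\dist(x,A^c))^{+}$ on $A$ does \emph{not} follow from the interior cone condition. Take a dumbbell: two balls of radius $R$ joined by a cylindrical neck of radius $w\ll R$ and length $L$. This set satisfies the cone condition (with small $h,r$), yet at the centre $x$ of the neck one has $\dist(x,A^c)=w$, while for any $\eps\in(w,R)$ the set $A_\eps$ lives only in the bulbs and $\dist(x,A_\eps)\approx L/2\gg\eps-w$. The cone condition only guarantees that from a boundary point the depth increases at some rate $r/|\eta|$ along the cone axis, not at unit rate, so you cannot in general reach depth $\eps$ in distance $\eps-\dist(x,A^c)$.

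The good news is that the identity is not needed: the pointwise monotonicity $\ell_{\eps_1}\le\ell_{\eps_2}$ holds for \emph{any} closed $A$, with no geometric hypothesis. Suppose $\ell_{\eps_1}(x)>0$, pick $z\in A_{\eps_1}$ realising $d_1\Let\dist(x,A_{\eps_1})<\eps_1$, and set $w\Let x+\lambda(z-x)$ with $\lambda\Let\bigl(1-(\eps_1-\eps_2)/d_1\bigr)^{+}$. Lipschitz continuity of $\dist(\cdot,A^c)$ gives $\dist(w,A^c)\ge\dist(z,A^c)-(1-\lambda)d_1\ge\eps_1-(\eps_1-\eps_2)=\eps_2$, so $w\in A_{\eps_2}$ and $d_2\Let\dist(x,A_{\eps_2})\le\lambda d_1=(d_1-\eps_1+\eps_2)^{+}$. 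A one-line computation then shows $d_2/\eps_2\le d_1/\eps_1$ (using $d_1<\eps_1$), i.e.\ $\ell_{\eps_2}(x)\ge\ell_{\eps_1}(x)$. This is presumably what the paper has in mind when it calls the monotonicity ``obvious''; once you replace your identity by this interpolation, your proof goes through. As a bonus you can drop the side assumption $\dist(A,B)>0$ that you introduced to transfer the cone condition from $A\cup B$ to $A$.
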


    \begin{proof}
        By definition, the family of the sets $(A_\eps)_{\eps>0}$ is nested and increasing as $\eps \da 0$. Therefore, in view of \eqref{rem:A cup B}, $\tau_\eps$ is nonincreasing as $\eps \da 0$ pathwise on $\Omega$. Moreover it is obvious to see that the family of functions $\ell_\eps$ is increasing with respect to $\eps$. Hence, given an initial condition $(t,x) \in \set{S}$, an admissible control $\control u \in \controlmaps{U}_t$, and $\eps_1 \ge  \eps_2 >0$, pathwise on $\Omega$ we have
        \begin{align*}
            \ell_{\eps_2}\big( \traj{X}{t,x}{\control u}{\tau_{\eps_2}} \big) < 1 &\Lra \tau_{\eps_2} = \tau_B \mn T < \tau_{A_{\eps_2}} < \tau_{A_{\eps_1}} \\
            & \Lra \tau_{\eps_1} = \tau_B \mn T = \tau_{\eps_2} \\
            & \Lra \ell_{\eps_2}\big( \traj{X}{t,x}{\control u}{\tau_{\eps_2}} \big) \ge \ell_{\eps_1}\big( \traj{X}{t,x}{\control u}{\tau_{\eps_1}} \big),
        \end{align*}
        which immediately leads to $V_{\eps_2}(t,x) \ge V_{\eps_1}(t,x)$. Now let $(\eps_i)_{i\in\N}$ be a decreasing sequence of positive numbers that converges to zero, and for the simplicity of notation let $A_n \Let A_{\eps_n}$, $\tau_n \Let \tau_{\eps_n}$, and $\ell_n \Let \ell_{\eps_n}$. According to the definitions \eqref{V1} and \eqref{V eps}, we have
        \begin{subequations}
        \begin{align}
            V(t,x) & - \lim_{n \ra \infty}  V_{\eps_n}(t,x) = \sup_{\control u \in \controlmaps{U}_t} \E{ \ind{A}\big({\traj{X}{t,x}{\control u}{\wh \tau}}\big) } - \lim_{n \ra \infty} \sup_{\control u \in \controlmaps{U}_t} \E{ \ell_n\big({\traj{X}{t,x}{\control u}{\tau_n}}\big) } \nonumber \\
            & \label{1} = \sup_{\control u \in \controlmaps{U}_t} \E{ \ind{A}\big({\traj{X}{t,x}{\control u}{\wh \tau}}\big) } - \sup_{n \in \N} \sup_{\control u \in \controlmaps{U}_t} \E{\ell_n \big({\traj{X}{t,x}{\control u}{\tau_n}}\big)} \\
            & \le \sup_{\control u \in \controlmaps{U}_t} \Big(\E{ \ind{A}\big({\traj{X}{t,x}{\control u}{\wh \tau}}\big) } - \sup_{n \in \N} \E{ \ell_n\big({\traj{X}{t,x}{\control u}{\tau_n}}\big)} \Big)\nonumber \\
            & \le \sup_{\control u \in \controlmaps{U}_t} \inf_{n \in \N} \E{ \ind{A}\big({\traj{X}{t,x}{\control u}{\wh \tau}}\big) - \ind{A_n}\big({\traj{X}{t,x}{\control u}{\tau_n}}\big)} \nonumber \\
            &\label{2} = \sup_{\control u \in \controlmaps{U}_t} \inf_{n \in \N} \PP \Big( \{\tau_{A_n} > \tau_B \mn T \} \cap \{\tau_A \le T\}  \cap\{\tau_A < \tau_B\}  \Big) \\
            &\label{3} = \sup_{\control u \in \controlmaps{U}_t} \PP \Big(\bigcap_{n \in \N}  \{\tau_{A_n} > \tau_B \mn T \} \cap \{\tau_A \le T\}  \cap\{\tau_A < \tau_B\} \Big)\\
            &\label{4} \le \sup_{\control u \in \controlmaps{U}_t} \PP \Big( \{\tau_{A^\circ} \ge \tau_B \mn T \} \cap \{\tau_A \le T\}  \cap\{\tau_A < \tau_B\}\Big) \\          
            &\label{5} \le \sup_{\control u \in \controlmaps{U}_t} \PP \big( \{\tau_{A^\circ} > \tau_A \} \cup\{\tau_A = T\} \big) = 0 
        \end{align}
        \end{subequations}
        Note that the equality in \eqref{1} is due to the fact that the sequence of the functions $\big(V_{\eps_n}\big)_{n\in \N}$ is increasing pointwise. One can infer the equality \eqref{2} when $\ind{A}\big({\traj{X}{t,x}{\control u}{\wh \tau}}\big) = 1$ and $\ind{A_n}\big({\traj{X}{t,x}{\control u}{\tau_n}}\big) = 0$ as $\ind{A}\big({\traj{X}{t,x}{\control u}{\wh \tau}}\big) \ge \ind{A_n}\big({\traj{X}{t,x}{\control u}{\tau_n}}\big)$ pathwise on $\Omega$. Moreover, since the sequence of the stopping times $(\tau_n)_{n\in\N}$ is decreasing $\PP$-a.s., the family of sets $\big(\{\tau_{A_n} > \tau_A \}\big)_{n\in\N}$ is also decreasing; consequently, the equality \eqref{3} follows. In order to show \eqref{4}, it is not hard to inspect that
        \begin{align*}
        	\omega \in \bigcap_{n \in \N}  \{\tau_{A_n} > \tau_B \mn T \} & \Lra \forall n \in \N, \quad \tau_{A_n}(\omega) > \tau_B(\omega)\mn T \\ 
			& \Lra \forall n \in \N, \quad \forall s \le \tau_B(\omega)\mn T, \quad \traj{X}{t,x}{\control{u}}{s}(\omega) \notin A_n \\
			& \Lra \forall s \le \tau_B(\omega)\mn T, \quad \traj{X}{t,x}{\control{u}}{s}(\omega) \notin \bigcup_{n\in \N} A_n = A^\circ \\
			& \Lra \omega \in \{\tau_{A^\circ} \ge \tau_B \mn T\}.      	
        \end{align*}   
	Based on non-degeneracy and the interior cone condition in Assumptions \ref{a:exit time}.\ref{a:exit time:degenerate} and \ref{a:exit time}.\ref{a:exit time:Set condition} respectively, by virtue of \cite[Corollary 3.2, p.\ 65]{ref:Bass-1998}, we see that the set $\{\tau_{A^\circ} > \tau_A\}$ is negligible. Moreover, the interior cone condition implies that the Lebesgue measure of $\partial A$, boundary of $A$, is zero. In view of non-degeneracy and Girsanov's Theorem \cite[Theorem 5.1, p.\ 191]{ref:KarShr-91}, $\traj{X}{t,x}{\control u}{r}$ has a probability density $d(r,y)$ for $r \in ]t,T]$; see \cite[Section IV.4]{SonerBook} and references therein. Hence, the aforesaid property of $\partial A$ results in $\PP\{\tau_A = T\} \leq \PP\big \{ \traj{X}{t,x}{\control{u}}{T} \in \partial A \big \} = \int_{\partial A} d(T,y)\diff y = 0$, and the second equality of \eqref{5} follows. It is straightforward to see $V \ge V_{\eps_n}$ pointwise on $\set S$ for all $n\in\N$. The assertion now follows at once.
  \end{proof}

The following corollary asserts the application of the results developed in \secref{sec:Alternative Characterization} to the function $V_\eps$ in \eqref{V eps}. The corollary not only simplifies the PDE characterization developed in \secref{sec:PDE} from discontinuous to continuous regime, but also provides a theoretical justification for deployment of existing PDE solvers (e.g., \cite{ref:Mitchell-toolbox}) for numerical purposes. This result in fact coincides with classical stochastic optimal control when the payoff function is continuous \cite[Theorem 8.2]{Crandali_Ishii_Lions_VoscositySolutions}.

\begin{Cor}[Continuous regime]
\label{cor:PDE_eps}
	Consider the system in \eqref{SDE} and suppose that Assumption \ref{a:exit time} holds. Then, for any $\eps>0$ the function $V_\eps:\set{S} \ra [0,1] $ in \eqref{V eps} is continuous. Furthermore, if  $(A_\eps \cup B)^c$ is bounded\footnote{One may replace this condition by imposing the drift and diffusion terms to be bounded.} then $V_\eps$ is the unique viscosity solution of 
	\begin{align}
		\label{PDE eps}
		 \begin{cases}
			 -\sup\limits_{u \in \controlset{U}} \mathcal{L}^u V_\eps(t,x) = 0  \qquad & \text{in} \quad [0,T[ \times (A_\eps \cup B)^c \\
		 	 V_\eps(t,x) = \ell_\eps(x) \qquad & \text{on} \quad    [0,T] \times (A_\eps \cup B)   \bigcup  \{ T \} \times \R^n 
 		 \end{cases}
	\end{align}
\end{Cor}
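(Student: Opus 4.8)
The plan is to first promote Proposition~\ref{prop:J1 semicontinuous} to genuine continuity of $V_\eps$, then read the interior equation and the Dirichlet data off Theorem~\ref{Theorem DPE 1} and Proposition~\ref{prop:boundary}, and finally obtain uniqueness from a comparison principle. Throughout I apply the results of \secref{sec:Alternative Characterization} with $O\Let A_\eps\cup B$ and $\ell\Let\ell_\eps$: since $A$ and $B$ are closed, $A_\eps=\{x\in A\mid\dist(x,A^c)\ge\eps\}$ and hence $O=A_\eps\cup B$ are closed, so $\ol{O}^c=(A_\eps\cup B)^c$; and $\ell_\eps$ is $(1/\eps)$-Lipschitz with values in $[0,1]$, hence continuous and bounded, so Assumption~\ref{a:exit time}.\ref{a:exit time:l lsc} holds, while $A_\eps\cup B$ satisfies the interior cone condition of Assumption~\ref{a:exit time}.\ref{a:exit time:Set condition}, just as $A_\eps$ does. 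Writing $J(t,x,\control{u})\Let\E{\ell_\eps\big(\traj{X}{t,x}{\control{u}}{\tau_\eps(t,x)}\big)}$ and recalling (as noted after \eqref{V}) that $V_\eps(t,x)=\sup_{\control{u}\in\controlmaps{U}}J(t,x,\control{u})$, lower semicontinuity of $V_\eps$ is immediate: it is a supremum over a \emph{fixed} index set $\controlmaps{U}$ of the lower semicontinuous maps $(t,x)\mapsto J(t,x,\control{u})$ furnished by Proposition~\ref{prop:J1 semicontinuous}.

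For upper semicontinuity I would fix $(t,x)$, take $(t_n,x_n)\to(t,x)$ and, for each $n$, a control $\control{u}_n\in\controlmaps{U}_{t_n}\subset\controlmaps{U}$ with $V_\eps(t_n,x_n)\le J(t_n,x_n,\control{u}_n)+\tfrac1n$. Lemma~\ref{lem:unif cont}, applied with $O=A_\eps\cup B$, gives $\big\|\traj{X}{t_n,x_n}{\control{u}_n}{\tau_\eps(t_n,x_n)}-\traj{X}{t,x}{\control{u}_n}{\tau_\eps(t,x)}\big\|\to0$ $\PP$-a.s.; combining this with the Lipschitz property of $\ell_\eps$ and then with the dominated convergence theorem (using $0\le\ell_\eps\le1$) yields $J(t_n,x_n,\control{u}_n)-J(t,x,\control{u}_n)\to0$. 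Since $J(t,x,\control{u}_n)\le V_\eps(t,x)$, we conclude $\limsup_n V_\eps(t_n,x_n)\le V_\eps(t,x)$, so $V_\eps$ is continuous.

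Continuity gives $(V_\eps)_*=(V_\eps)^*=V_\eps$ and $\ell_\eps^*=\ell_\eps$, so Theorem~\ref{Theorem DPE 1} makes $V_\eps$ simultaneously a viscosity super- and subsolution of $-\sup_{u\in\controlset{U}}\mathcal{L}^u V_\eps=0$ on $[0,T[\times(A_\eps\cup B)^c$, i.e.\ a viscosity solution there, and Proposition~\ref{prop:boundary}, equation~\eqref{boundary pointwise}, supplies the Dirichlet data $V_\eps=\ell_\eps$ on $[0,T]\times(A_\eps\cup B)\cup\{T\}\times\R^n$. Thus $V_\eps$ solves \eqref{PDE eps}. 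For uniqueness let $W$ be any other bounded continuous viscosity solution of \eqref{PDE eps}; both $W$ and $V_\eps$ equal $\ell_\eps$ on the parabolic boundary of $Q\Let[0,T[\times(A_\eps\cup B)^c$, which is contained in $\big([0,T]\times\partial(A_\eps\cup B)\big)\cup\big(\{T\}\times\R^n\big)$. Because $f$ and $\sigma$ are bounded and Lipschitz (Assumption~\ref{a:basic}), the Hamiltonian $(x,p,M)\mapsto-\sup_{u\in\controlset{U}}\big(f(x,u)\cdot p+\tfrac12\mathrm{Tr}[\sigma\sigma^\top(x,u)M]\big)$ satisfies the structure conditions of the comparison principle for second-order parabolic equations \cite[Ch.~V]{SonerBook}, \cite{Crandali_Ishii_Lions_VoscositySolutions}; the boundedness of $A_\eps\cup B$, together with the uniform bounds on $W$, $V_\eps$ and $\ell_\eps$, lets one run that comparison on the unbounded domain $Q$ through a standard localization at spatial infinity. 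Applying it twice, with the roles of $W$ and $V_\eps$ interchanged, gives $W=V_\eps$ on $\ol{Q}=[0,T]\times\R^n$.

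The continuity and PDE-characterization steps are essentially bookkeeping on top of \secref{sec:Alternative Characterization}; the real obstacle is the uniqueness step. Concretely, one must justify a comparison principle for an operator carrying no zeroth-order term (so the required monotonicity has to come from the $\partial_t$ term), on an \emph{unbounded} spatial domain $(A_\eps\cup B)^c$, with data prescribed both on the lateral boundary $[0,T]\times\partial(A_\eps\cup B)$ and at the terminal time, and one must control competing sub-/supersolutions near spatial infinity and at the corner $\{T\}\times\partial(A_\eps\cup B)$; this is exactly where the hypothesis that $A_\eps\cup B$ be bounded enters.
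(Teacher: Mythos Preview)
Your continuity argument and your derivation of the interior equation plus boundary data are exactly the paper's approach: the paper also obtains continuity of $V_\eps$ from Lipschitz continuity of $\ell_\eps$ together with Lemma~\ref{lem:unif cont}, and then reads off \eqref{PDE eps} from Theorem~\ref{Theorem DPE 1} and Proposition~\ref{prop:boundary}. Your write-up is simply more explicit (separating lower and upper semicontinuity, spelling out the $\eps$-optimal controls and the dominated-convergence step), but the substance is identical.

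The divergence is in the uniqueness step. The paper does \emph{not} attempt any localization on an unbounded spatial domain; it invokes the weak comparison principle \cite[Theorem~8.1, Chap.~VII]{SonerBook}, and it states explicitly that this result requires the \emph{domain} $(A_\eps\cup B)^c$ to be bounded. In other words, the paper's own proof reads the hypothesis as ``$(A_\eps\cup B)^c$ bounded'' rather than ``$A_\eps\cup B$ bounded''---there is a mismatch between the corollary's statement and its proof. You have taken the statement at face value and then tried to push a comparison argument through on the unbounded complement via ``a standard localization at spatial infinity.'' That sentence is doing a lot of work that is not actually carried out: with no zeroth-order term and merely bounded (not coercive) coefficients, comparison on an unbounded parabolic cylinder requires either growth restrictions on the competing functions or an explicit penalization at infinity, and you have not supplied either. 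So your uniqueness argument, as written, has a genuine gap---and it is a gap the paper sidesteps by (tacitly) working on a bounded domain and citing a ready-made comparison theorem.
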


\begin{proof}
	The continuity of the function $V_\eps$ defined as in \eqref{V eps} readily follows from Lipschitz continuity of the payoff function $\ell_\eps$ and uniform continuity of the stopped solution process in Proposition \ref{prop:unif cont}.\footnote{This continuity result can, alternatively, be deduced via the comparison result of the viscosity characterization of Theorem \ref{Theorem DPE 1} together with boundary conditions \eqref{boundary visc} \cite{Crandali_Ishii_Lions_VoscositySolutions}.} The PDE characterization of $V_\eps$ in \eqref{PDE eps} is the straightforward consequence of its continuity and Theorem \ref{Theorem DPE 1} with boundary condition in Theorem \ref{thm:boundary}. The uniqueness follows from the weak comparison principle, \cite[Theorem VII.8.1, p.\ 274]{SonerBook}, that in fact requires $(A_\eps \cup B)^c$ being bounded.
\end{proof}

Let us remark that under further regularity conditions on the payoff function (i.e., differentiability), the assertion of Corollary~\ref{cor:PDE_eps} may be even more strengthened in which the PDE is understood in the classical sense; see for example \cite[Theorem VI.5.1, p.\ 238]{SonerBook} for further details. The following Remark summarizes the preceding results and pave the analytical ground so that the Reach-Avoid problem is amenable to numerical solutions by means of off-the-shelf PDE solvers.

\begin{Remark}[Numerical stability]
	Theorem \ref{thm:approx} implies that the conservative approximation $V_\eps$ can be arbitrarily precise, i.e., $V(t,x) = \lim_{\eps \downarrow 0} V_\eps(t,x)$. Corollary \ref{cor:PDE_eps} implies that $V_\eps$ is continuous, i.e., the PDE characterization in Theorem \ref{Theorem DPE 1} can be simplified to the continuous version. Continuous viscosity solution can be numerically solved by invoking existing toolboxes, e.g. \cite{ref:Mitchell-toolbox}. The precision of numerical solutions can also be arbitrarily accurate at the cost of computational time and storage. In other words, let $V^\delta_\eps$ be the numerical solution of $V_\eps$ obtained through a numerical routine, and let $\delta$ be the descretizaion parameter (grid size) as required by \cite{ref:Mitchell-toolbox}. Then, since the continuous PDE characterization meets the hypothesis required for the toolbox \cite{ref:Mitchell-toolbox}, we have $V_\eps = \lim_{\delta \downarrow 0} V^\delta_\eps$, and consequently we have $V(t,x) = \lim_{\eps \downarrow 0} \lim_{\delta \downarrow 0} V^\delta_\eps(t,x)$.
\end{Remark}

\section{Numerical Example: Zermelo Navigation Problem}
\label{sec:simulation}
	To illustrate the theoretical results  of the preceding sections, we apply the proposed reach-avoid formulation to  the \textit{Zermelo} navigation problem with constraints and stochastic uncertainties. In control theory, the Zermelo navigation problem consists of a swimmer who aims to reach an island (Target) in the middle of a river while avoiding the waterfall, with the river current leading towards the waterfall. The situation is depicted in Figure \ref{Fig:waterfall}.
	\begin{figure}
    	\begin{center}
		\includegraphics[scale = 0.7]{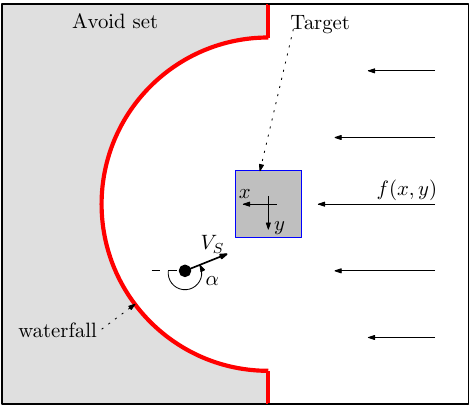}
		\caption{Zermelo navigation problem : a swimmer in the river}
		\label{Fig:waterfall}    	
    	\end{center}
	\end{figure}		
	We say that the swimmer ``succeeds'' if he reaches the target before going over the waterfall, the latter forming a part of his Avoid set.

\subsection{Mathematical modeling}
	The dynamics of the river current are nonlinear; we let $f(x,y)$ denote the river current at position $(x,y)$ \cite{ref:CardaliaguetQuincampoixSaint-1997}. We assume that the current flows with constant direction towards the waterfall, with the magnitude of $f$ decreasing in distance from the middle of the river: 
	\begin{equation*}
	    f(x,y) \Let \begin{bmatrix}
	                1 - ay^2 \\
	                0 \\
	              \end{bmatrix}.
	\end{equation*}
	To describe the uncertainty of the river current, we consider the diffusion term 
	\begin{equation*}
	    \sigma(x,y) \Let \begin{bmatrix}
	                \sigma_x & 0 \\
	                0 & \sigma_y \\
	              \end{bmatrix}.
	\end{equation*}
	We assume that the swimmer moves with constant velocity $V_S$, and we assume that he can change his direction $\alpha$ instantaneously. The complete dynamics of the swimmer in the river is given by
\begin{equation}
\label{river dynamic}
		\begin{bmatrix}
            \diff x_s \\
            \diff y_s \\
        \end{bmatrix} = \begin{bmatrix}
                            1 - ay^2 + V_S \cos(\alpha)\\
                                V_S \sin(\alpha) \\
                        \end{bmatrix} \diff s + \begin{bmatrix}
                                                  \sigma_x & 0 \\
                                                  0 & \sigma_y \\
                                                \end{bmatrix} \diff W_s,
\end{equation}
where $W_s$ is a two-dimensional Brownian motion, and $\alpha \in [\pi, \pi]$ is the direction of the swimmer with respect to the $x$ axis and plays the role of the controller for the swimmer.

\subsection{Reach-Avoid formulation}
	Obviously, the probability of the swimmer's ``success'' starting from some initial position in the navigation region depends on starting point $(x,y)$. As shown in \secref{sec:connection}, this probability can be characterized as the level set of a function, and by Theorem \ref{Theorem DPE 1} this function is the discontinuous viscosity solution of a certain differential equation on the navigation region with particular lateral and terminal boundary conditions. The differential operator $\mathcal{L}$ in Theorem \ref{Theorem DPE 1} can be analytically calculated in this case as follows:
	\begin{align*}
	    \sup\limits_{u \in \controlset{U}}&\mathcal{L}^u \Phi(t,x,y)  = \\
	     & \sup\limits_{\alpha \in [-\pi,\pi]} \Big( \partial_t \Phi(t,x,y) + \big(1 - ay^2 + V_S\cos(\alpha)\big)\partial_x \Phi(t,x,y)   \\ 
	     & \qquad +  V_S \sin(\alpha) \partial_y \Phi(t,x,y) + \frac{1}{2}\sigma_x^2 \partial_{x}^2 \Phi(t,x,y) + \frac{1}{2}\sigma_y^2 \partial_{y}^2 \Phi(t,x,y) \Big).
	\end{align*}
	It can be shown that the controller value maximizing the above Dynkin operator is
	{
	\begin{align*}
	    \alpha^*(t,x,y) & := \operatorname*{arg\,max}_{\alpha \in [-\pi,\pi]} \Big( \cos(\alpha)\partial_x\Phi(t,x,y) + \sin(\alpha)\partial_y\Phi(t,x,y) \Big) \\
	    & = \arctan(\frac{\partial_y\Phi}{\partial_x\Phi})(t,x,y).
	\end{align*}}
	Therefore, the differential operator can be simplified to
	{
	\begin{align*}
	    \sup_{u \in \controlset{U}} \mathcal{L}^u & \Phi(t,x,y) = \partial_t \Phi(t,x,y) + (1 - ay^2)\partial_x \Phi(t,x,y) \\
	    & + \frac{1}{2}\sigma_x^2 \partial_{x}^2 \Phi(t,x,y) + \frac{1}{2}\sigma_y^2 \partial_{y}^2 \Phi(t,x,y) + V_S\|\nabla \Phi(t,x,y)\|,
	\end{align*}
	}
	where $\nabla\Phi(t,x,y) := \big[\partial_x\Phi(t,x,y) \quad \partial_y\Phi(t,x,y)\big]$.

\begin{figure}[t!]%
\centering
  \subfigure[The first scenario: the swimmer's speed is slower than the river current, the current being assumed uniform.]{\label{Fig:VF_1}\includegraphics[scale = 0.16]{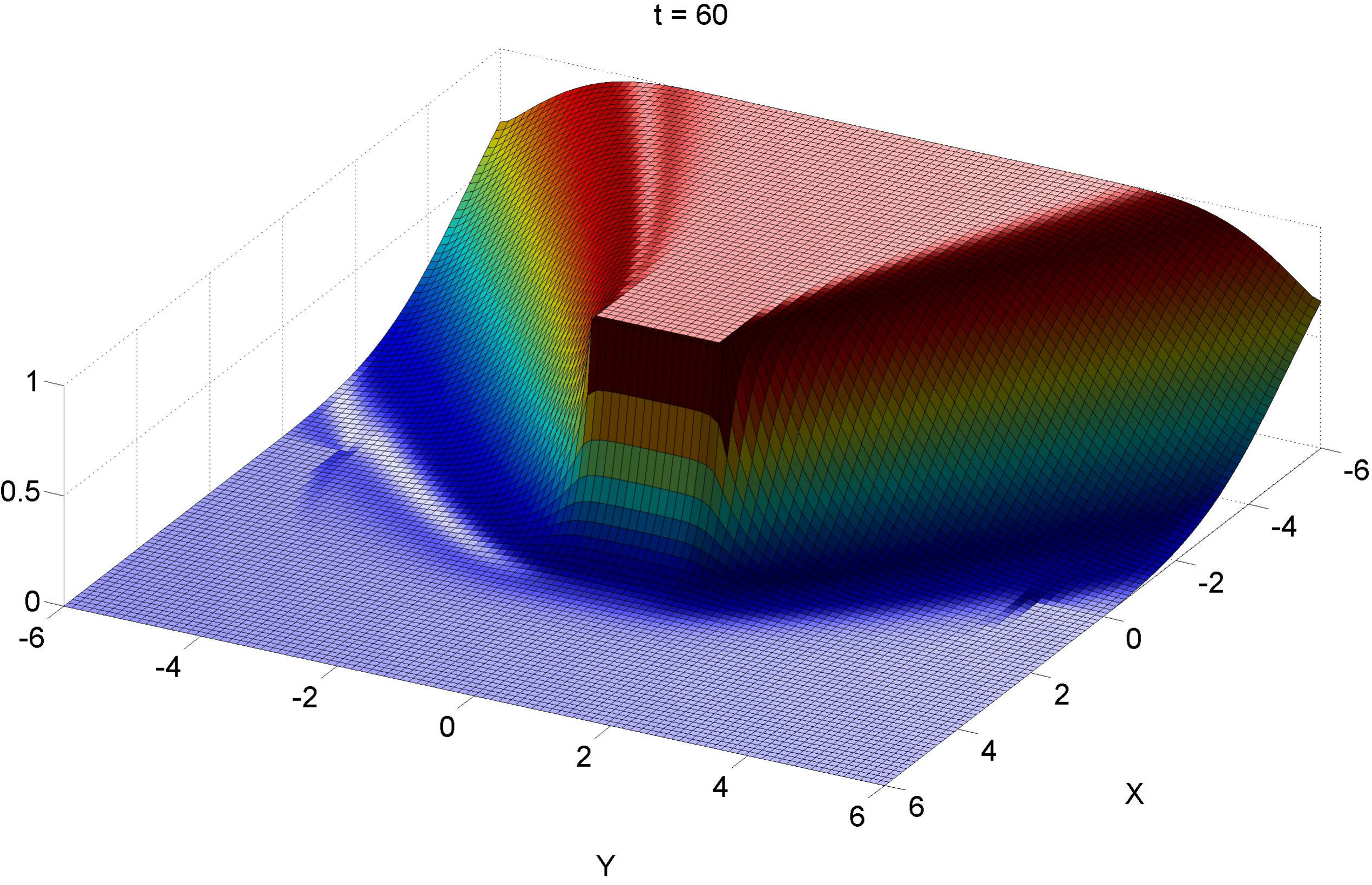}} \quad
  \subfigure[The second scenario: the swimmer's speed is slower than the maximum river current.]{\label{Fig:VF_2}\includegraphics[scale = 0.16]{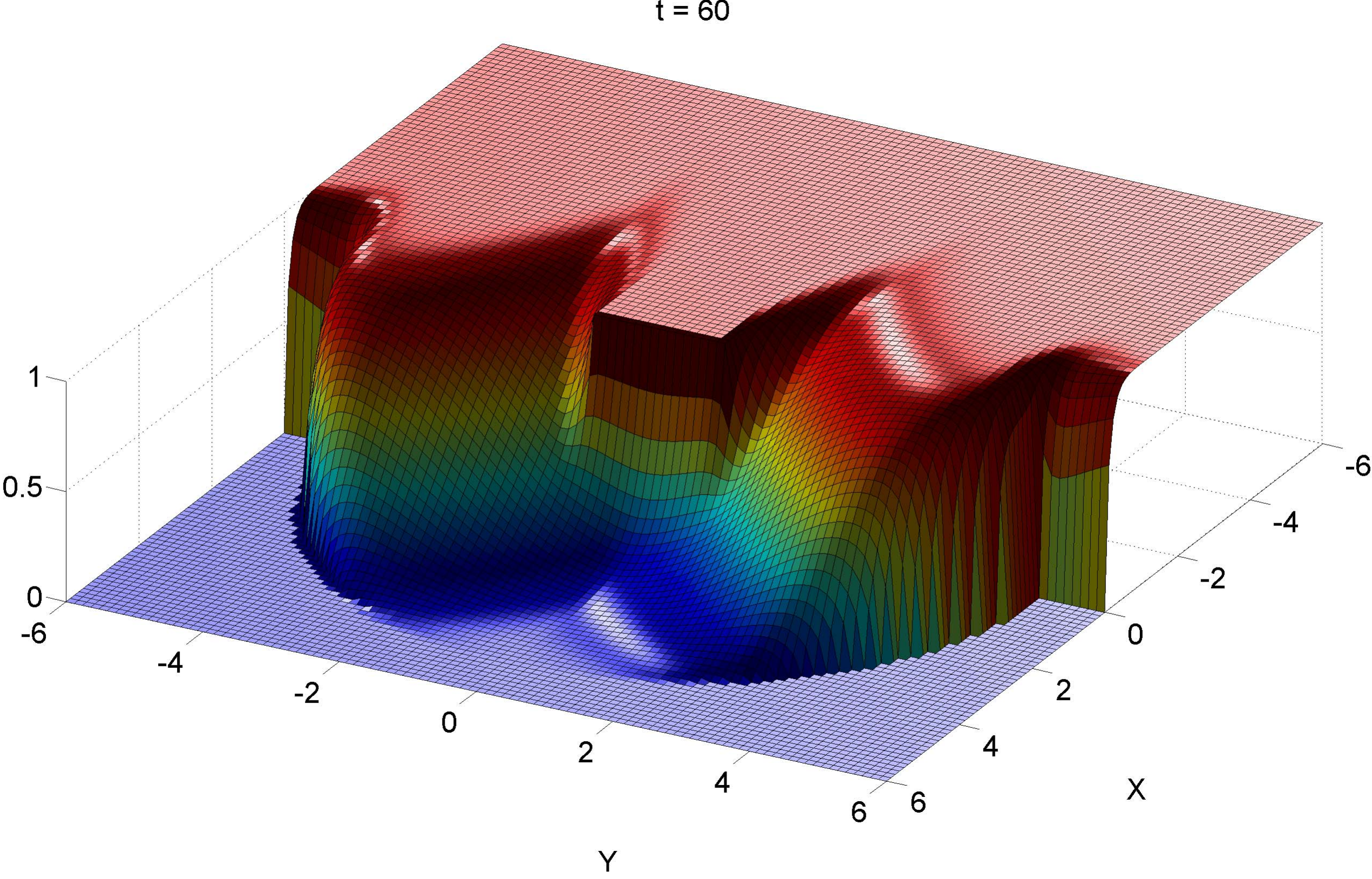}}\\
  \subfigure[The third scenario: the swimmer can swim faster than the maximum river current.]
        {\label{Fig:VF_3}\includegraphics[scale = 0.16]{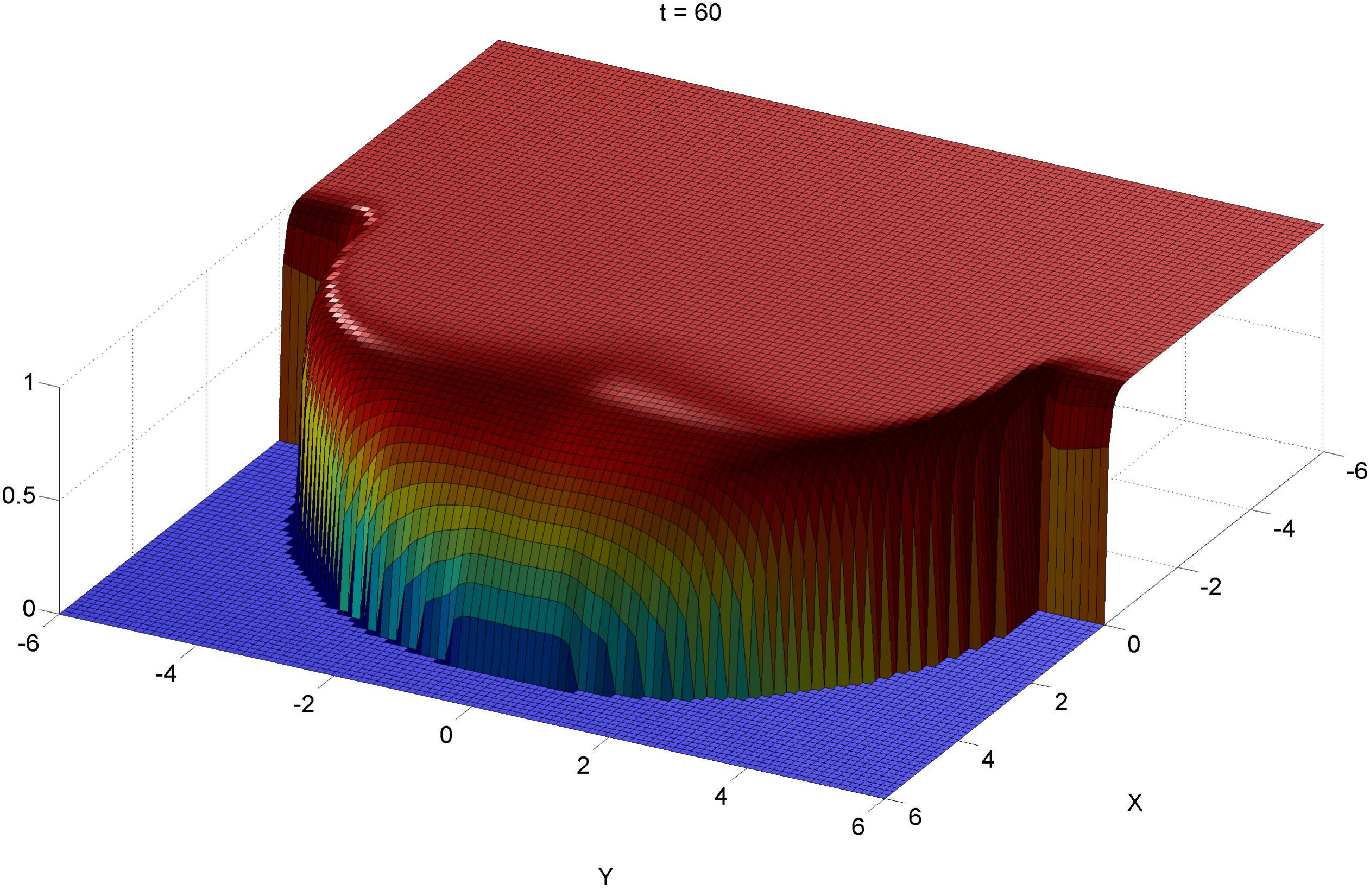}}%
  \caption{The value functions for the different scenarios}
\label{Fig:ValueFunc}
\end{figure}

\subsection{Simulation results}
	For the following numerical simulations we fix the diffusion coefficients $\sigma_x = 0.5$ and $\sigma_y = 0.2$. We investigate three different scenarios: first, we assume that the river current is uniform, i.e., $a = 0 \mathrm{m}^{-1}\mathrm{s}^{-1}$ in \eqref{river dynamic}. Moreover, we consider the case that the swimmer velocity is less than the current flow, e.g., $V_S = 0.6\; \mathrm{m}\mathrm{s}^{-1}$. Based on the above calculations, Figure \ref{Fig:VF_1} depicts the value function which is the numerical solution of the differential operator equation in Theorem \ref{Theorem DPE 1} with the corresponding terminal and lateral conditions. As expected, since the swimmer's speed is less than the river current, if he starts from the beyond the target he has less chance of reach the island. This scenario is also captured by the value function shown in Figure \ref{Fig:VF_1}.

\begin{figure}[t!]%
\centering
  \subfigure[The first scenario: the swimmer's speed is slower than the river current, the current being assumed uniform.]{\label{Fig:LS_1}\includegraphics[scale = 0.24]{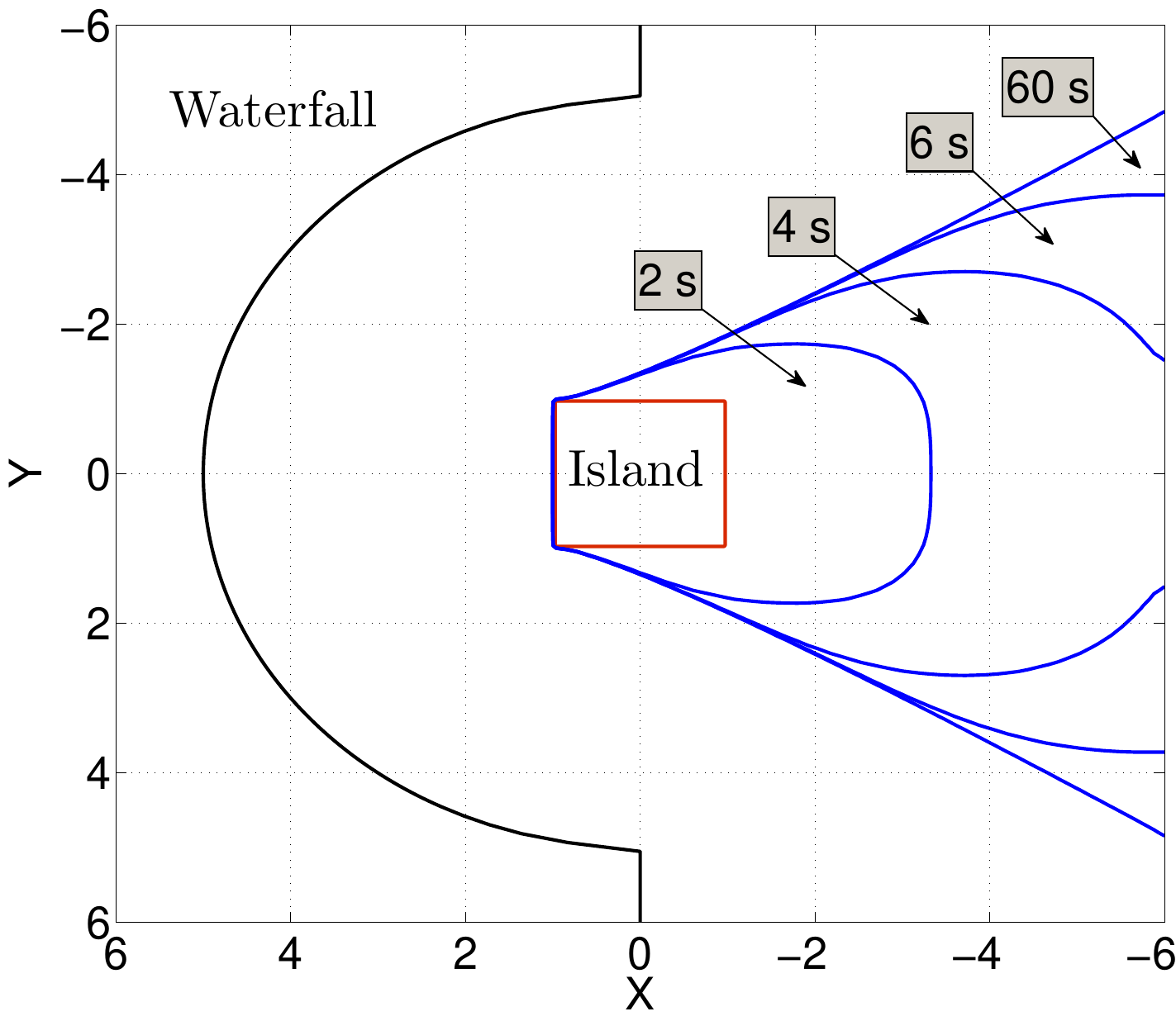}} \quad
  \subfigure[The second scenario: the swimmer's speed is slower than the maximum river current.]{\label{Fig:LS_2}\includegraphics[scale = 0.24]{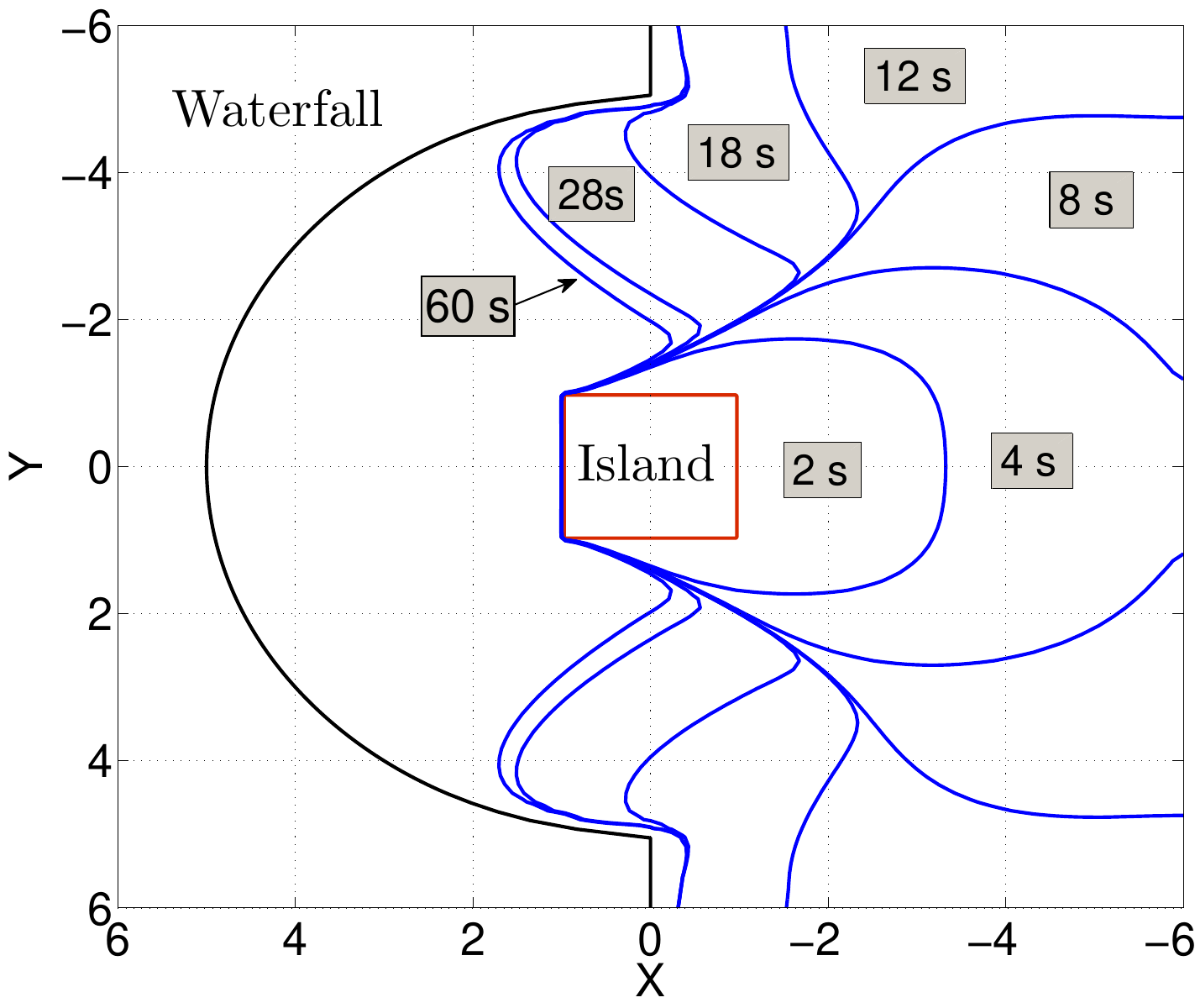}}\\
  \subfigure[The third scenario: the swimmer can swim faster than the maximum river current.]
        {\label{Fig:LS_3}\includegraphics[scale = 0.24]{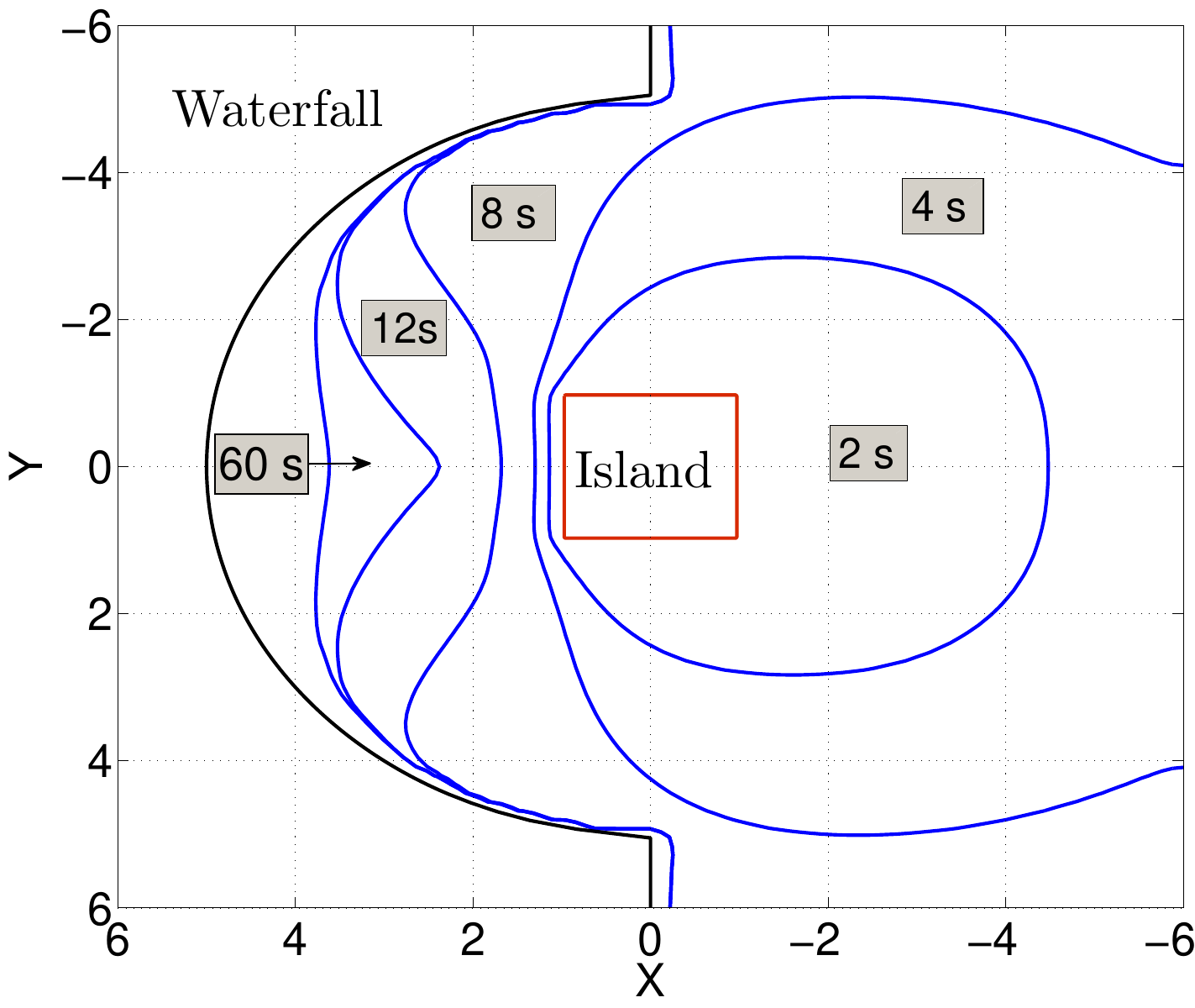}}%
  \caption{The level sets of the value functions for the different scenarios}
\label{Fig:levelsets}
\end{figure}

	Second, we assume that the river current is non-uniform and decreases with respect to the distance from the middle of the river. This means that the swimmer, even in the case that his speed is less than the current, has a non-zero probability of success if he initially swims to the sides of the river partially against its direction, followed by swimming in the direction of the current to reaches the target. This scenario is depicted in Figure \ref{Fig:VF_2}, where a non-uniform river current $a = 0.04 \mathrm{m}^{-1}\mathrm{s}^{-1}$ in \eqref{river dynamic} is considered.

	Third, we consider the case that the swimmer can swim faster than river current. In this case we expect the swimmer to succeed with some probability even if he starts from beyond the target. This scenario is captured in Figure \ref{Fig:VF_3}, where the reachable set (of course in probabilistic fashion) covers the entire navigation region of the river except the region near the waterfall.

	In the following we show the level sets of the aforementioned value functions for $p = 0.9$. As defined in  \secref{sec:connection} (and in particular in Proposition \ref{prop:Reach-V1}), these level sets, roughly speaking, correspond to the reachable sets with probability $p = 90\%$ in certain time horizons while the swimmer is avoiding the waterfall. By definition, as shown by the following figures, these sets are nested with respect to the time horizon.

	All simulations were obtained using the Level Set Method Toolbox \cite{ref:Mitchell-toolbox} (version 1.1), with a grid $101 \times 101$ in the region of simulation.
\section{Concluding Remarks and Future Direction}
\label{sec:conclusion}
	In this article we studied a class of stochastic reach-avoid problems from an optimal control perspective. The proposed framework provides a set characterization of the stochastic reach-avoid set based on discontinuous viscosity solutions of a second order PDE. In contrast to earlier approaches, this methodology is not restricted to almost-sure notions and allows for discontinuous payoff functions. We also provided theoretical justification to compute the desired reach-avoid set by means of off-the-shelf PDE solvers. 

	In future works we aim to extend our framework to stochastic motion-planning that indeed involves concatenating basic reachability maneuver studied in this work. Another extension to the current setting could be the existence of a second player who plays against our main objective, which is known as the stochastic differential game in literature. 

\section*{Acknowledgment}
	The authors are grateful to Ian Mitchell for his assistance and advice on the numerical coding of the examples. The authors thank V.\ S.\ Borkar, H.\ M.\ Soner, A.\ Ganguly, and S.\ Pal for helpful discussions and pointers to references.

\appendix
\section{Technical Proofs of \secref{sec:connection}}\label{app:A}

\begin{proof}[Proof of Proposition \ref{prop:Reach-V1}]
We first establish the equality of $V_1 = V_2$. To this end, let us fix $\control{u} \in \controlmaps{U}$ and $(t,x)$ in $\set S$. Observe that it suffices to show that pointwise on $\Omega$,
\begin{align*}
    \ind{A}(\traj{X}{t,x}{\control{u}}{\wh{\tau}}) = \Sup{t}{s}{T} \{\ind{A}(\traj{X}{t,x}{\control{u}}{s}) \mn \Inf{t}{r}{s} \ind{B^c}(\traj{X}{t,x}{\control{u}}{r}) \}.
\end{align*}
Since $A$ and $B$ are closed, thanks to Remark \ref{rem:entry time} one can see that
\begin{align*}
 \Sup{t}{s}{T} \{ &\ind{A}(\traj{X}{t,x}{\control{u}}{s}) \mn \Inf{t}{r}{s} \ind{B^c}(\traj{X}{t,x}{\control{u}}{r}) \} = 1\\
 & \Longleftrightarrow \exists s \in [t,T] ~ \traj{X}{t,x}{\control{u}}{s} \in A ~\text{and}~ \forall r \in [t,s] ~ \traj{X}{t,x}{\control{u}}{r} \in B^c\\
 & \Longleftrightarrow \exists s \in [t,T] ~ \tau_A \leq s \leq T ~\text{and}~ \tau_B > s\\
 & \Longleftrightarrow \traj{X}{t,x}{\control{u}}{\tau_A} = \traj{X}{t,x}{\control{u}}{\tau_A \mn \tau_B \mn T} = \traj{X}{t,x}{\control u} {\tau_{A \cup B} \mn T} \in A \\
 & \Longleftrightarrow \ind{A} \big(\traj{X}{t,x}{\control{u}}{\wh{\tau}}\big) = 1
\end{align*}
and since the functions take values in $\{0,1\}$, we have $V_1(t,x)=V_2(t,x)$.

As a first step towards proving $V_1 = V_3$, we start with establishing $V_3 \geq V_1$. It is straightforward from the definition that
\begin{align}
    \label{V3>V1}
    \begin{array}{l} 
    \sup\limits_{\tau \in \setofst{t,T}} \inf\limits_{\sigma \in \setofst{t,\tau}} \EE\Big[\ind{A} (\traj{X}{t,x}{\control{u}}{\tau}) \mn \ind{B^c}(\traj{X}{t,x}{\control{u}}{\sigma})\Big] \geq \inf\limits_{\sigma \in \setofst{t,\wh{\tau}}} \E{\ind{A}(\traj{X}{t,x}{\control{u}}{\wh{\tau}}) \mn \ind{B^c}(\traj{X}{t,x}{\control{u}}{\sigma})},
    \end{array}
\end{align}

where $\wh{\tau}$ is the stopping time defined in (\ref{V1}). For all stopping times $\sigma \in \setofst{t,\wh{\tau}}$, in view of (\ref{rem:exit time A}) we have
\begin{align*}
   \ind{B^c}(\traj{X}{t,x}{\control{u}}{\sigma}) = 0
   &\Lra  \traj{X}{t,x}{\control{u}}{\sigma} \in B \Lra \tau_B \le \sigma \le \wh{\tau} = \tau_A \mn \tau_B \mn T 
   \\&\Lra \tau_B = \sigma = \wh{\tau} < \tau_A \Lra \traj{X}{t,x}{\control{u}}{\wh{\tau}} \notin A \\
   &\Lra \ind{A}(\traj{X}{t,x}{\control{u}}{\wh{\tau}}) = 0
\end{align*}
This implies that for all $\sigma \in \setofst{t,\wh{\tau}}$,
\begin{equation*}
    \ind{A}(\traj{X}{t,x}{\control{u}}{\wh{\tau}}) \mn \ind{B^c}(\traj{X}{t,x}{\control{u}}{\sigma}) = \ind{A}(\traj{X}{t,x}{\control{u}}{\wh{\tau}}) \qquad \PP\text{-a.s.}
\end{equation*}
which, in connection with \eqref{V3>V1} leads to
\begin{align*}
    \sup_{\tau \in \setofst{t,T}} \inf_{\sigma \in \setofst{t,\tau}} \E{\ind{A}(\traj{X}{t,x}{\control{u}}{\tau}) \mn \ind{B^c}(\traj{X}{t,x}{\control{u}}{\sigma})} \geq
    \E{\ind{A}(\traj{X}{t,x}{\control{u}}{\wh{\tau}})}.
\end{align*}
By arbitrariness of the control strategy $\control{u}\in\controlmaps{U}$, we get $V_3 \ge V_1$. It remains to show  $V_2 \leq V_1$. Given $\control{u}\in \controlmaps{U}$ and $\tau \in \setofst{t,T}$, let us choose $\wh{\sigma} := \tau \mn \tau_B$. Note that since $t \le \wh{\sigma} \le \tau$ then $\wh \sigma \in \setofst{t,\tau}$. Hence,
\begin{align}
    \label{V3<V1}
    \inf_{\sigma \in \setofst{t,\tau}} \E{\ind{A}(\traj{X}{t,x}{\control{u}}{\tau}) \mn \ind{B^c}(\traj{X}{t,x}{\control{u}}{\sigma})} \le \E{\ind{A}(\traj{X}{t,x}{\control{u}}{\tau})\mn \ind{B^c}(\traj{X}{t,x}{\control{u}}{\wh{\sigma}})}.
\end{align}
Note that by an argument similar to the proof of Proposition \ref{prop:Reach-V1}, for all $\tau \in \setofst{t,T}$:
\begin{align*}
    \ind{A}(\traj{X}{t,x}{\control{u}}{\tau})\mn &\ind{B^c}(\traj{X}{t,x}{\control{u}}{\wh{\sigma}}) = 1  \Lra   \traj{X}{t,x}{\control{u}}{\tau}\in A ~\text{and}~ \traj{X}{t,x}{\control{u}}{\wh{\sigma}} \notin B\\
    & \Lra \tau_A \le \tau \le T ~\text{and}~ \wh{\sigma} \neq \tau_B\\
    & \Lra \tau_A \le \tau \le T ~\text{and}~ \tau_A \le \wh{\sigma} = \tau < \tau_B \\
    & \Lra \wh{\tau} = \tau_A \mn \tau_B \mn T = \tau_A \Lra \ind{A}(\traj{X}{t,x}{\control{u}}{\wh{\tau}}) = 1.
\end{align*}
It follows that for all $\tau \in \setofst{t,\tau}$,
\begin{equation*}
     \ind{A}(\traj{X}{t,x}{\control{u}}{\tau})\mn \ind{B^c}(\traj{X}{t,x}{\control{u}}{\wh{\sigma}}) \le \ind{A}(\traj{X}{t,x}{\control{u}}{\wh{\tau}}) \qquad \PP\text{-a.s.}
\end{equation*}
which in connection with \eqref{V3<V1} leads to
\begin{align*}
    \sup_{\tau \in \setofst{t,T}} \inf_{\sigma \in \setofst{t,\tau}} \E{\ind{A}(\traj{X}{t,x}{\control{u}}{\tau}) \mn \ind{B^c}(\traj{X}{t,x}{\control{u}}{\sigma})} \leq
    \E{\ind{A}(\traj{X}{t,x}{\control{u}}{\wh{\tau}})}.
\end{align*}
By arbitrariness of the control strategy $\control{u}\in \controlmaps{U}$ we arrive at $V_3 \le V_1$.

We now show the second assertion. since $A$ is closed, making use of the implication \eqref{rem:exit time A} and the definition of reach-avoid set in \ref{def:RA within}, we can express the set $\RA(t,p;A,B)$ by
	\begin{align}
	\label{Reach-Avoid equivalent def}
		\RA(t,p;A,B) = \Big \{x \in & \R^n ~\big | ~ \exists \control{u}  \in \controlmaps{U} ~:~ \notag \\
		& \PP\big( \tau_A < \tau_B ~~ \text{and} ~~ \tau_A \leq T \big) > p \Big\}. 
	\end{align}
Also, in view of the properties \eqref{rem:A cup B} and \eqref{rem:exit time closed set}, for any control $\control{u} \in \controlmaps{U}$ we have
	\begin{align*}
		\traj{X}{t,x}{\control{u}}{\wh{\tau}} \in A \Lra \tau_A \leq \wh{\tau} ~\text{and}~ \wh{\tau}\neq \tau_B \Lra T \geq \wh{\tau} = \tau_A < \tau_B,
	\end{align*}
indicating that the sample path $\traj{X}{t,x}{\control{u}}{\cdot}$ hits the set $A$ before $B$ at the time $\wh{\tau} \leq T$. Moreover,
	\begin{align*}
		\traj{X}{t,x}{\control{u}}{\wh{\tau}} \notin A \Lra \wh{\tau} \neq \tau_A \Lra \wh{\tau} = (\tau_B \mn T) < \tau_A,
	\end{align*}
and this means that the sample path does not succeed in reaching $A$ while avoiding set $B$ within time $T$. Therefore, the event $ \{ \tau_A < \tau_B ~~ \text{and} ~~ \tau_A \leq T\}$ is equivalent to $ \{\traj{X}{t,x}{\control{u}}{\wh{\tau}} \in A \}$, and
	\begin{align*}
		\PP\big( \tau_A < \tau_B ~~ \text{and} ~~ \tau_A \leq T \big) = \E{\ind{A}( \traj{X}{t,x}{\control{u}}{\wh{\tau}})}.
	\end{align*}
This, in view of \eqref{Reach-Avoid equivalent def} and arbitrariness of control strategy $\control{u}\in\controlmaps{U}$ leads to the desired assertion.
\end{proof}

\section{Technical proofs of \secref{sec:Alternative Characterization}}
\label{app:B}
\begin{proof}[Proof of Proposition \ref{prop:J1 semicontinuous}]
We first prove continuity of $\wh{\tau}(t,x)$ with respect to $(t,x)$. 
    Let us take a sequence $(t_n,x_n) \ra (t_0,x_0)$, and let $\big(\traj{X}{t_n,x_n}{\control{u}}{r}\big)_{r \ge t_n}$ be the solution of \eqref{SDE} for a given policy $\control{u} \in \controlmaps{U}$. Let us recall that by definition we assume that $\traj{X}{t,x}{\control{u}}{s} \Let x$ for all $s \in [0, t]$. Here we assume that $t_n \le t$, but one can effectively follow the same technique for $t_n > t$. Notice that it is straightforward to observe that by the definition of stochastic integral in \eqref{SDE} we have
    \begin{equation*}
    	\traj{X}{t_n,x_n}{\control{u}}{r} = \traj{X}{t_n,x_n}{\control{u}}{t} + \int_{t}^r f\big( \traj{X}{t_n,x_n}{\control{u}}{s}, u_s \big) \diff s + \int_{t}^r \sigma \big(\traj{X}{t_n,x_n}{\control{u}}{s}, u_s\big) \diff W_s 
    \end{equation*}
    Therefore, by virtue of \cite[Theorem 2.5.9, p.\ 83]{Krylov_ControlledDiffusionProcesses}, for all $q \ge 1$ we have 
    \begin{align*}
    	\EE \Big[\sup_{r \in [t,T]}& \big \| \traj{X}{t,x}{\control{u}}{r}  -   \traj{X}{t_n,x_n}{\control{u}}{r}   \big \|^{2q} \Big]  \le C_1(q,T,K) \EE \Big[ \big \| x - \traj{X}{t_n,x_n}{\control{u}}{t}   \big \|^{2q} \Big]\\
    	& \le 2^{2q-1}C_1(q,T,K) \EE \Big[ \|x-x_n\|^{2q} + \big \| x_n - \traj{X}{t_n,x_n}{\control{u}}{t}   \big \|^{2q} \Big],
    \end{align*}   
    where in light of \cite[Corollary 2.5.12, p.\ 86]{Krylov_ControlledDiffusionProcesses}, it leads to 
    \begin{align}
    \label{ineq-init}
    	    	\EE \Big[\sup_{r \in [t,T]} \big \|  \traj{X}{t,x}{\control{u}}{r}  -& \traj{X}{t_n,x_n}{\control{u}}{r}   \big \|^{2q} \Big] \le \\
    	    	& \notag C_2(q,T,K,\|x\|)\big(\|x-x_n\|^{2q} + |t-t_n|^q\big).
    \end{align}
	In the above relations $K$ is the Lipschitz constant of $f$ and $\sigma$; $C_1$ and $C_2$ are constant depending on the indicated parameters. Hence, in view of Kolmogorov's continuity criterion \cite[Corollary 1 Chap.\ IV, p.\ 220]{ref:Protter-2005}, one may consider a version of the stochastic process $\traj{X}{t,x}{\control{u}}{\cdot}$ which is continuous in $(t,x)$ in the topology of uniform convergence on compacts. This yields to the fact that $\PP$-a.s, for any $\eps > 0 $, for all sufficiently large $n$,
    \begin{equation}\label{Tube}
        \traj{X}{t_n,x_n}{\control{u}}{r} \in \ball{B}_{\eps}\big(\traj{X}{t_0,x_0}{\control{u}}{r}\big), \qquad \forall r \in [t_n,T],
    \end{equation}
    where $\ball{B}_\eps(y)$ denotes the ball centered at $y$ and radius $\eps$. Based on the Assumptions \ref{a:exit time}.\ref{a:exit time:degenerate} and \ref{a:exit time}.\ref{a:exit time:Set condition}, it is a well-known property of non-degenerate processes that the set of sample paths that hit the boundary of $O$ and do not enter the set is negligible \cite[Corollary 3.2, p.\ 65]{ref:Bass-1998}. Hence, by the definition of $\wh \tau$ and \eqref{rem:exit time A}, one can conclude that 
    \begin{equation*}
        \forall \delta >0, ~ \exists \eps>0, \quad \bigcup_{s \in [t_0,\wh \tau(t_0,x_0) - \delta]} \ball{B}_{\eps}(\traj{X}{t_0,x_0}{\control{u}}{s}) \cap \ol{O} = \emptyset \qquad \PP \text{-a.s.}
    \end{equation*}
    This together with \eqref{Tube} indicates that $\PP$-a.s.\ for all sufficiently large $n$,
    \begin{equation*}
    	\traj{X}{t_n,x_n}{\control{u}}{r} \notin \ol{O}, \qquad \forall r \in [t_n, \wh \tau(t_0,x_0)[ ~,
    \end{equation*}    
    which in conjunction with $\PP$-a.s.\ continuity of sample paths immediately leads to
    \begin{equation}
	\label{liminf tau_n}
        \liminf_{(t_n,x_n) \ra (t,x)} \wh \tau(t_n,x_n) \ge \wh \tau(t_0,x_0) \qquad \PP \text{-a.s.}
    \end{equation}
    On the other hand by the definition of $\wh \tau$ and Assumptions \ref{a:exit time}.\ref{a:exit time:degenerate} and \ref{a:exit time}.\ref{a:exit time:Set condition}, again in view of \cite[Corollary 3.2, p.\ 65]{ref:Bass-1998},
    \begin{equation*}
        \forall \delta>0, \quad \exists s \in [\tau_O(t_0,x_0), \tau_O(t_0,x_0)+\delta[, \quad  \traj{X}{t_0,x_0}{\control{u}}{s} \in O^\circ \quad \PP \text{-a.s.},
    \end{equation*}
    where $\tau_O$ is the first entry time to $O$, and $O^\circ$ denotes the interior of the set $O$. Hence, in light of \eqref{Tube}, $\PP$-a.s.\ there exists $\eps >0 $, possibly depending on $\delta$, such that for all sufficiently large $n$ we have $\traj{X}{t_n,x_n}{\control{u}}{s} \in \ball{B}_{\eps}(\traj{X}{t_0,x_0}{\control{u}}{s}) \subset O$.
    According to the definition of $\tau_O(t_n,x_n)$ and \eqref{rem:exit time A}, this implies $\tau_O(t_n,x_n) \le s < \tau_O(t_0,x_0)+\delta$. From arbitrariness of $\delta$ and the definition of $\wh \tau$ in \eqref{J}, it leads to
     \begin{equation*}\label{limsup tau_n}
        \limsup_{(t_n,x_n) \ra (t,x)} \wh \tau(t_n,x_n) \le \wh \tau(t_0,x_0) \qquad \PP \text{-a.s.},
    \end{equation*}
    where in conjunction with \eqref{liminf tau_n}, $\PP$-a.s.\ continuity of the map $(t,x) \mapsto \wh \tau(t,x)$ at $(t_0,x_0)$ follows.

   It remains to show lower semicontinuity of $J$. Note that $J$ is bounded since $\ell$ is. In accordance with the $\PP$-a.s.\ continuity of $\traj{X}{t,x}{\control{u}}{r}$ and $\wh \tau(t,x)$ with respect to $(t,x)$, and Fatou's lemma, we have
   \begin{align}
        \liminf_{n \ra \infty}  & J\big(t_n, x_n,\control{u}\big)
         = \liminf_{n \ra \infty} \E{ \el{\traj{X}{t_n,x_n}{\control{u}}{\wh{\tau}(t_n,x_n)} } } \nonumber \\
        & = \liminf_{n \ra \infty} \E{ \el{ \traj{X}{t_n,x_n}{\control{u}}{\wh{\tau}(t_n,x_n)}
        -\traj{X}{t,x}{\control{u}}{\wh{\tau}(t_n,x_n)} + \traj{X}{t,x}{\control{u}}{\wh{\tau}(t_n,x_n)}
        - \traj{X}{t,x}{\control{u}}{\wh{\tau}(t,x)} +\traj{X}{t,x}{\control{u}}{\wh{\tau}(t,x)}} } \nonumber \\
        & \label{fatou} = \liminf_{n \ra \infty} \E{ \el{\eps_n + \traj{X}{t,x}{\control{u}}{\wh{\tau}(t,x)}} }
        \ge \E{ \liminf_{n \ra \infty} \el{\eps_n + \traj{X}{t,x}{\control{u}}{\wh{\tau}(t,x)}}}\\
        &\ge 
         \E { \el{\traj{X}{t,x}{\control{u}}{\wh{\tau}(t,x)}} } = J(t,x,\control{u}) \nonumber,
    \end{align}
	where inequality in \eqref{fatou} follows from Fatou's Lemma, and $\eps_n \ra 0 $ $\PP$-a.s.\ as $n$ tends to $\infty$. Note that by definition $\traj{X}{t,x}{\control{u}}{\wh \tau (t_n,x_n)} = x$ on the set $\{ \wh \tau(t_n,x_n) < t \} $.
\end{proof}

\begin{proof}[Proof of Proposition \ref{prop:unif cont}]
	Let us consider a version of $\traj{X}{t,x}{\control{u}}{\cdot}$ which is almost surely continuous in $(t,x)$ uniformly respect to the policy $\control{u}$; this is always possible since the constant $C_2$ in \eqref{ineq-init} does not depend on $\control{u}$. That is, $\control{u}$ may only affect a negligible subset of $\Omega$; we refer to \cite[Theorem 72 Chap. IV, p.\ 218]{ref:Protter-2005} for further details on this issue. Hence, all the relations in the proof of Proposition \ref{prop:J1 semicontinuous}, in particular \eqref{Tube}, hold if we permit the control policy $\control{u}$ to depend on $n$ in an arbitrary way. Therefore, the assertions of Proposition \ref{prop:J1 semicontinuous} holds uniformly with respect to $(\control{u}_n)_{n \in \N} \subset \controlmaps{U}$. That is, for all $(t,x) \in \set{S}$, $(t_n,x_n) \ra (t,x)$, and $(\control{u}_n)_{n \in \N}$, with probability one we have 
	\begin{align}
		\label{limits}
		\left\{
		\begin{array}{l} \vspace{1mm}
		\lim\limits_{n \ra \infty} \sup\limits_{s \in [0,T]} \big\| \traj{X}{t_n,x_n}{\control{u}_n}{s} - \traj{X}{t,x}{\control{u}_n}{s} \big\| = 0,\\ 
		\lim\limits_{n \ra \infty} \big | \wh{\tau}(t_n,x_n) - \wh{\tau}(t,x) \big | = 0
		\end{array}
		\right.
	\end{align}
	where $\wh{\tau}$ is as defined in \eqref{V} while the solution process is driven by control policies $\control{u}_n$. Moreover, according to \cite[Corollary 2.5.10, p.\ 85]{Krylov_ControlledDiffusionProcesses} for every $r,s \in [t,T]$ and $q\ge 1$ we have
		\begin{equation*}
			\EE \Big[\big \| \traj{X}{t,x}{\control{u}}{r} - \traj{X}{t,x}{\control{u}}{s} \big \|^{2q} \Big] \le C_3 \big( q,T,K, \|x\| \big) \big | r - s \big | ^{q}, 
		\end{equation*} 
	following the arguments in the proof of Proposition \ref{prop:J1 semicontinuous} in conjunction with above inequality, one can also deduce that the mapping $s \mapsto \traj{X}{t,x}{\control{u}}{s}$ is $\PP$-a.s. continuous uniformly with respect to $\control{u}$.  Hence, one can infer that for all $(t,x) \in \set{S}$, with probability one we have 
	\begin{align*}
		\lim_{n \ra \infty} \big\| \traj{X}{t_n,x_n}{\control{u}_n}{\wh{\tau}(t_n,x_n)} - \traj{X}{t,x}{\control{u}_n}{\wh{\tau}(t,x)} \big\| \le \lim_{n \ra \infty} \big\| & \traj{X}{t_n,x_n}{\control{u}_n}{\wh{\tau}(t_n,x_n)}  - \traj{X}{t,x}{\control{u}_n}{\wh{\tau}(t_n,x_n)} \big\| + \lim_{n \ra \infty} \big\| \traj{X}{t,x}{\control{u}_n}{\wh{\tau}(t_n,x_n)} - \traj{X}{t,x}{\control{u}_n}{\wh{\tau}(t,x)} \big\| = 0.
	\end{align*}
	Notice that the first limit term above tends to zero as the version of the solution process $\traj{X}{t,x}{\control{u_n}}{\cdot}$ on the compact set $[0,T]$ is continuous in the initial condition $(t,x)$ uniformly with respect to $n$. The second term is the consequence of limits in \eqref{limits} and continuity of the mapping $s \mapsto \traj{X}{t,x}{\control{u}_n}{s}$ uniformly in $n \in \N$. 
\end{proof}
\bibliographystyle{amsalpha} 
\bibliography{ref,ref_MohajerinEsfahani}

\end{document}